\newtheorem{theorem}{Theorem}
\newtheorem{lemma}[theorem]{Lemma}
\newtheorem*{conjecture}{\bf Conjecture}
\newtheorem{proposition}[theorem]{Proposition}
\theoremstyle{remark}
\numberwithin{theorem}{section} \numberwithin{equation}{section}
\newcommand{\SU}{\mathrm{SU}}
\newcommand{\C}{\mathbb{C}}
\newcommand{\Z}{\mathbb{Z}}
\newcommand{\sgn}{\operatorname{sgn}}
\newcommand{\re}{\textnormal{Re}}
\newcommand{\im}{\textnormal{Im}}
\def\H{\mathbb{H}}
\begin{document}

\title[$\mathrm{SU}(2)$-Donaldson invariants of the complex projective plane]
{$\mathrm{SU}(2)$-Donaldson invariants of the complex projective plane}

\author{Michael Griffin, Andreas Malmendier and Ken Ono}

\address{Department of Mathematics and Computer Science, Emory University,
Atlanta, Georgia 30322} \email{mjgrif3@emory.edu}
\email{ono@mathcs.emory.edu}

\address{Department of Mathematics, Colby College,
Waterville, Maine 04901}
\email{andreas.malmendier@colby.edu}

\thanks{The first and third authors thank
the generous support of the National Science Foundation. The third author
thanks the support of the Asa Griggs Candler Fund.}

\begin{abstract}
There are two families of Donaldson invariants for the complex projective plane,
corresponding to the $\mathrm{SU}(2)$-gauge theory and the $\mathrm{SO}(3)$-gauge theory
with non-trivial Stiefel-Whitney class.
In 1997 Moore and Witten \cite{MooreWitten} conjectured that the regularized $u$-plane integral
on $\mathbb{C}\mathrm{P}^2$
gives the generating functions for these invariants. In earlier work \cite{M_Ono},
the second two authors proved the conjecture for the $\mathrm{SO}(3)$-gauge theory.
Here we complete the proof of the conjecture by confirming the claim
 for the $\mathrm{SU}(2)$-gauge theory. As a consequence, we find that the
 $\SU(2)$ Donaldson invariants for $\mathbb{C}\mathrm{P}^2$ are explicit linear combinations of the Hurwitz class numbers which arise in the theory of imaginary quadratic
 fields and orders.
\end{abstract}

\maketitle

\section{Introduction and Statement of Results}

Donaldson invariants of smooth simply connected four-dimensional manifolds \cite{DonaldsonKronheimer} are diffeomorphism class invariants which play a central
role in differential topology and mathematical physics.
There are two families of Donaldson invariants, corresponding to the $\mathrm{SU}(2)$-gauge
theory and the $\mathrm{SO}(3)$-gauge theory with non-trivial Stiefel-Whitney class. In each case, the invariants are graded homogeneous
polynomials on the homology $H_0(\mathbb{C}\mathrm{P}^2) \oplus H_2(\mathbb{C}\mathrm{P}^2)$, where $H_i(\mathbb{C}\mathrm{P}^2)$ is considered to
have degree $(4-i)/2$, defined using the fundamental homology classes of the corresponding moduli spaces of anti-selfdual instantons arising in gauge
theory. These invariants are typically very difficult to calculate.


Here we consider the simplest manifold to which Donaldson's definition applies, the complex projective plane
$\mathbb{C}\mathrm{P}^2$ with the Fubini-Study metric.
In earlier work, G\"ottsche and Zagier \cite{GoettscheZagier} gave a formula for the Donaldson
invariants of rational surfaces in terms of theta functions of indefinite lattices.
As an application, G\"ottsche \cite{Goettsche} derived
closed expressions for the two families of the Donaldson invariants of $\mathbb{C}\mathrm{P}^2$
assuming the truth of the Kotschick-Morgan conjecture.  Recently, G\"ottsche,
Nakajima, Hiraku,  and Yoshioka
\cite{GoettscheNakajimaYoshioka} have unconditionally
proved these formulas.

This paper concerns deep conjectures relating these formulas to
constructions in theoretical physics.
From the viewpoint of theoretical physics \cite{Witten1}, these
two families of Donaldson invariants and the related Seiberg-Witten invariants
are the correlation functions of a supersymmetric topological
gauge theory with gauge group $\mathrm{SU}(2)$ and $\mathrm{SO}(3)$.
Witten \cite{Witten2} argued that one should be able
to compute these correlation
functions in a so called {\it low
energy effective field theory}. This theory has the advantage of being an
{\em abelian} $\mathcal{N}=2$ supersymmetric topological gauge theory, and
the data required to define the theory
only involves line bundles of even (resp. odd) first Chern class on
$\mathbb{C}\mathrm{P}^2$ if the gauge group is $\mathrm{SU}(2)$ (resp. $\mathrm{SO}(3)$).
The vacua of the low energy effective field theory are parametrized by the $u$-plane which Seiberg and Witten \cite{SeibergWitten1} describe in terms of the classical
modular curve $\H/\Gamma_0(4)$,
together with a meromorphic one-form. Finally, Moore and Witten \cite{MooreWitten} obtained the correlation functions as regularized integrals over the
$u$-plane, where the integrands are modular functions which are determined by the gauge group.
These regularized $u$-plane integrals define a way of extracting
certain contributions for each boundary
component near the cusps at $\tau=0,2,\infty$ of the modular curve, and Moore and Witten observed \cite{MooreWitten} that the
cuspidal
contributions
at $\tau=0,2$ vanish trivially. This vanishing
corresponds to the mathematical statement that
the Seiberg-Witten invariants on $\mathbb{C}\mathrm{P}^2$
vanish due to the presence of a
Fubini-Study metric of positive scalar curvature \cite{Witten3}.

Concerning the contribution from the cusp $\tau=\infty$,
Moore and Witten made the following deep
conjecture which relates the $u$-plane integral to Donaldson invariants.

\begin{conjecture}[Moore and Witten \cite{MooreWitten}]
The contribution at $\tau=\infty$ to the regularized $u$-plane integral is the generating function for the
Donaldson invariants of $\mathbb{C}\mathrm{P}^2$.
\end{conjecture}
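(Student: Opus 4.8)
The plan is to realize both sides of the conjectured identity (for the $\SU(2)$ theory) as explicit $q$-series and to match them using the theory of harmonic Maass forms, following the strategy used for the $\mathrm{SO}(3)$ theory in \cite{M_Ono}. First I would make the left-hand side concrete: write the regularized $u$-plane integral as an integral over a fundamental domain for $\Gamma_0(4)$ of the Moore--Witten measure against the Siegel--Narain theta function attached to the intersection form of $\mathbb{C}\mathrm{P}^2$. For the $\SU(2)$ theory the photon flux sum runs over line bundles of \emph{even} first Chern class, which is the essential difference from the $\mathrm{SO}(3)$ case. The resulting integrand is nonholomorphic and the integral diverges at the cusps, so the contribution at $\tau=\infty$ is only defined through the Moore--Witten regularization.

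The central analytic step is to exhibit the (suitably normalized) integrand as a total antiholomorphic derivative $\partial_{\bar\tau}\widehat{\Phi}$ of a real-analytic modular object $\widehat{\Phi}$. By Stokes' theorem the two-dimensional integral then collapses to contour integrals around the three cusps $\tau=0,2,\infty$. The contributions at $\tau=0,2$ vanish, which recovers the vanishing of the Seiberg--Witten invariants, while the contribution at $\tau=\infty$ is extracted as the constant term in the $q$-expansion of the holomorphic part of $\widehat{\Phi}$. The crucial point is to identify this holomorphic part with (a rational multiple of) Zagier's weight $3/2$ nonholomorphic Eisenstein series, whose Fourier coefficients are the Hurwitz class numbers $H(n)$ of imaginary quadratic orders; this is precisely where the class numbers enter.

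On the Donaldson side I would start from G\"ottsche's closed formula for the $\SU(2)$ invariants of $\mathbb{C}\mathrm{P}^2$, now unconditional by \cite{GoettscheNakajimaYoshioka}, and use the G\"ottsche--Zagier indefinite-theta-function description \cite{GoettscheZagier} to rewrite its generating function as a $q$-series in the same Hurwitz class numbers. The conjecture then reduces to an equality of two $q$-series. I would finish by arguing that both series are holomorphic parts of harmonic Maass forms of the appropriate weight sharing the same shadow (image under the $\xi$-operator) and the same principal part, so that their difference is a holomorphic modular form living in a space I can show to be trivial; hence the difference vanishes and the two generating functions agree.

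The main obstacle will be the regularization together with the total-derivative step: rigorously producing $\widehat{\Phi}$ and pinning down its holomorphic part as Zagier's Eisenstein series, while tracking every normalization so that the constant term at $\tau=\infty$ matches G\"ottsche's combinatorial generating function exactly. The even-class $\SU(2)$ flux sum requires a Zwegers-type nonholomorphic completion of a theta function, and controlling this completion --- and ensuring that no spurious holomorphic modular form survives the final comparison --- is where the real difficulty lies.
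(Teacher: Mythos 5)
Your overall architecture matches the paper's: G\"ottsche's closed formula on the Donaldson side, the Moore--Witten evaluation of the $\tau=\infty$ contribution on the $u$-plane side (total antiholomorphic derivative, Stokes, constant term of the holomorphic part, Zagier's weight $3/2$ class-number generating function $Q_{01}$), and a comparison mediated by harmonic Maass forms with matching shadows. But your final step contains a genuine gap. The conjecture does \emph{not} reduce to an equality of two $q$-series: for each pair $(m,n)$ it asserts only that a certain difference of $q$-series has vanishing \emph{constant term}, and that difference is in general a nonzero series (in the paper it comes out to $\bigl(q\frac{d}{dq}Z\bigr)(Z/2)^{m}P_n(Z)$ for explicit nonzero polynomials $P_n$). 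Correspondingly, the two harmonic Maass forms one actually compares --- the completion of $K_0(8\tau)/\eta^3(8\tau)$ built from Zwegers's $\mu$-function on the G\"ottsche side, and $Q_{01}(8\tau)$ on the Moore--Witten side --- do share a shadow, but their difference is \emph{not} zero and does not lie in a trivial space: it equals the nonzero weakly holomorphic form $E^*(4\tau)/(8\Theta_4(\tau))$. So the ``same shadow, same principal part, trivial space, hence equal'' argument fails at the last step.

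What is actually needed, and what the paper supplies, is a further step after substituting $\widehat{K_0}(\tau)/\eta^3(8\tau)-Q_{01}(8\tau)=E^*(4\tau)/(8\Theta_4(\tau))$ into the $(m,n)$-expression: one shows the whole expression is an exact derivative $q\frac{d}{dq}$ of a polynomial in the modular function $Z(q)=E^*(4\tau)/\bigl(\Theta_2(\tau)^2\Theta_3(\tau)^2\bigr)$, whose polynomials exhaust the space $M_0^*(\Gamma_0(8))$ of modular functions holomorphic away from infinity; exact derivatives have no constant term. Handling the powers of $E_2$ and the bracket operator $\mathcal{E}^{\ell}$ requires showing that each auxiliary function $\mathcal{G}_{\ell}$ is itself such a polynomial, which in turn uses the cancellation of the Mordell integrals arising from the transformation laws of $\mu$. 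A minor further correction: the Zwegers-type nonholomorphic completion is needed on the G\"ottsche side, for the Appell--Lerch sums $K_{2t}$, not for the even-flux theta sum on the $u$-plane side, which is already handled by Zagier's Eisenstein series.
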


As evidence for this conjecture,
in the case of the gauge group $\mathrm{SU}(2)$, Moore and Witten \cite{MooreWitten} computed the first 40 invariants and found them to be in agreement
with the results of Ellingsrud and G\"ottsche \cite{EllingsrudGoettsche}.
In recent work, the second and third authors proved this conjecture
in the $\mathrm{SO}(3)$ case.
Here we complete the proof of the conjecture by confirming the claim for the gauge group $\mathrm{SU}(2)$.
We prove the following theorem.

\begin{theorem}\label{MainTheorem}
The conjecture of Moore and Witten in the case of the $\mathrm{SU}(2)$-gauge theory on $\mathbb{C}\mathrm{P}^2$ is true.
\end{theorem}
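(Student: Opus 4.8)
The plan is to reduce the conjecture to an identity between two explicit $q$-series and then to verify it using the theory of harmonic Maass forms. The right-hand side, the generating function for the $\mathrm{SU}(2)$ Donaldson invariants, is now governed by the unconditional formula of G\"ottsche, Nakajima, and Yoshioka \cite{GoettscheNakajimaYoshioka}, which expresses it through theta-type series attached to the rank-one intersection form of $\mathbb{C}\mathrm{P}^2$ together with insertions of the point class and the surface class. The left-hand side is the $\tau=\infty$ contribution to the regularized Moore-Witten $u$-plane integral. My strategy is to evaluate this contribution as an explicit $q$-series and to match its Fourier coefficients against those of the G\"ottsche-Nakajima-Yoshioka formula.

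First I would write the $u$-plane integral in Moore-Witten form as the integral over a fundamental domain $\mathcal{F}$ for $\Gamma_0(4)$ of a measure factor times a Siegel-Narain theta function built from the intersection form of $\mathbb{C}\mathrm{P}^2$ and depending non-holomorphically on $\tau$, with the observables entering as polynomial insertions in the periods. Because $\mathbb{C}\mathrm{P}^2$ has $b_2^+=1$, the integrand fails to be holomorphic but has only mild growth toward the cusps, so the divergence of the integral can be treated by a Zagier-style regularization. The contribution at $\tau=\infty$ is then the piece of the regularized integral localized at that cusp, which I would isolate by an unfolding and holomorphic-projection argument in the spirit of Borcherds' regularized theta lifts.

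The crucial observation is that, after the rank-one theta integral is carried out, the generating object is the holomorphic part of a weight-$3/2$ harmonic Maass form whose shadow is an ordinary unary theta series; concretely, it is Zagier's non-holomorphic weight-$3/2$ Eisenstein series, whose holomorphic Fourier coefficients are the Hurwitz class numbers $H(n)$. Propagating the point-class and surface-class insertions through the computation produces explicit linear combinations of the $H(n)$, which I would then compare term by term with the coefficients extracted from \cite{GoettscheNakajimaYoshioka}. Since the $\mathrm{SO}(3)$ case was settled in \cite{M_Ono} by precisely this circle of ideas, most of the analytic machinery is already available; the genuinely new input is the even first-Chern-class sector, which places the relevant theta series in a different residue class and changes its behavior at the cusp $\tau=\infty$.

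The hard part will be the regularization and the exact extraction of the holomorphic contribution at $\tau=\infty$. For the $\mathrm{SU}(2)$ (even) theory the theta function sits in a different coset of $\Gamma_0(4)$ than in the $\mathrm{SO}(3)$ case, so the boundary terms at the cusps $\tau=0,2$---which encode the Seiberg-Witten invariants and must be shown to vanish for $\mathbb{C}\mathrm{P}^2$---interact with the regularization in a new way. Establishing that these boundary terms vanish, and that the non-holomorphic completion reassembles into exactly Zagier's mock Eisenstein series rather than a twist of it, is the delicate step. Once this is in place, the final identification of the two sides reduces to matching finitely many Hurwitz-class-number expressions, which can be verified by comparing enough Fourier coefficients of the relevant modular forms.
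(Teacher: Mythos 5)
Your high-level framing---reduce the conjecture to an identity of $q$-series, with the G\"ottsche--Nakajima--Yoshioka formula on one side and the Moore--Witten $\tau=\infty$ contribution (built from Zagier's weight-$3/2$ Eisenstein series and Hurwitz class numbers) on the other---agrees with the paper. But the proposal omits the two ideas that actually carry the proof. First, the G\"ottsche formula is not expressed in terms of theta series alone: it involves the double series $K_{2t}(\tau)=q^{1/8}\sum_{\beta\ge 1}\sum_{\alpha\ge\beta}(-1)^{\alpha+\beta}(2\alpha+1)\beta^{2t+1}q^{(\alpha(\alpha+1)-\beta^2)/2}$, which is not modular. The heart of the paper is the recognition of $K_{2t}(8\tau)$ as derivatives of an Appell--Lerch sum, hence of Zwegers's $\mu$-function, and the completion of $K_0(8\tau)/\eta^3(8\tau)$ to a weight-$3/2$ harmonic Maass form on $\Gamma_0(8)$ whose shadow coincides with that of $Q_{01}(8\tau)$. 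Only then does the difference of the two generating objects become an honest modular form, computed explicitly to be $E^*(4\tau)/(8\Theta_4(\tau))$. Your proposal never mentions $K_t$ or its mock-modular completion, and without that step there is no way to compare the two sides at all.

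Second, your closing claim that the identification ``reduces to matching finitely many Hurwitz-class-number expressions, which can be verified by comparing enough Fourier coefficients'' is not a proof: one must show the vanishing of a constant term for \emph{every} pair $(m,n)$, an infinite family of identities in which the pole order at infinity grows with $m+n$. The paper's uniform mechanism is structural: after introducing $Z(q)=E^*(4\tau)/(\Theta_2(\tau)^2\Theta_3(\tau)^2)$ and showing that the auxiliary functions $\mathcal{G}_\ell-\mathcal{H}_\ell$ lie in the ring of polynomials in $Z(q)$, the full expression becomes $q\frac{d}{dq}$ applied to such a polynomial, and a derivative has no constant term. Finally, you locate the delicate step in the vanishing of the boundary contributions at $\tau=0,2$; but Moore and Witten already showed $Z_{\tau=0}=Z_{\tau=2}=0$ for $\sigma=1$ and either value of $w_2(E)$, so that is not where the difficulty lies, and no new analysis of the regularization at those cusps is needed.
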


This paper is organized as follows. In Section~\ref{Section2} we recall the work of
G\"ottsche and his collaborators, and we recall the work of Moore and Witten on the $u$-plane
integral. We conclude this section with a result which gives a criterion
(see Theorem~\ref{Criterion}) for
proving Theorem~\ref{MainTheorem}. In Section~\ref{Section3} we prove some
$q$-series identities, and we then combine the theory of modular forms with
the combinatorial properties of certain differential operators to prove
Theorem~\ref{Criterion}, thereby proving Theorem~\ref{MainTheorem}.


\section{Some relevant classical functions}
Here we fix notation concerning theta functions, and we recall a few standard facts about
Dedekind's eta-function and the {\it nearly} modular Eisenstein series $E_2(\tau)$.
We use the following normalization for the Jacobi theta function
\begin{equation}
 \vartheta_{ab}(v|\tau) = \sum_{n\in\mathbb{Z}} q^{\frac{(2n+a)^2}{8}} \; e^{\pi i \,(2n+a)(v+\frac{b}{2})},
\end{equation}
where $a,b \in \lbrace 0,1\rbrace$, $v\in\mathbb{C}$, $q=\exp(2\pi i \tau)$,
$\tau = x+iy \in \mathbb{H}$, and $\mathbb{H}$ is the complex upper half-plane.
The relation to the standard Jacobi theta functions is summarized in the following table:

\smallskip
\begin{equation}
\label{JacobiTheta}
 \begin{array}{|l|l|l|}
 \hline
 \ & \ & \\
  \vartheta_1(v|\tau) = \vartheta_{11}(v|\tau)
& \vartheta_1(0|\tau) = 0
& \vartheta_1'(0|\tau) = - 2\pi \eta^3(\tau) \\ [1ex]
\hline && \\[-2ex]
  \vartheta_2(v|\tau) = \vartheta_{10}(v|\tau)
& \vartheta_2(0|\tau) = \sum_{n\in\mathbb{Z}} q^{\frac{(2n+1)^2}{8}}
& \vartheta_2'(0|\tau) = 0 \\ [1ex]
\hline && \\[-2ex]
  \vartheta_3(v|\tau) = \vartheta_{00}(v|\tau)
& \vartheta_3(0|\tau) = \sum_{n\in\mathbb{Z}} q^{\frac{n^2}{2}}
& \vartheta_3'(0|\tau) = 0 \\ [1ex]
\hline && \\[-2ex]
  \vartheta_4(v|\tau) = \vartheta_{01}(v|\tau)
& \vartheta_4(0|\tau) = \sum_{n\in\mathbb{Z}} (-1)^n \, q^{\frac{n^2}{2}}
& \vartheta_4'(0|\tau) = 0\\
\hline
\end{array}
\end{equation}

\smallskip
Here $\eta(\tau)$ is the Dedekind eta-function with
\begin{equation}
 \eta^3(\tau) = \sum_{n=0}^\infty (-1)^n \; (2n+1) \; q^{\frac{(2n+1)^2}{8}} \;.
\end{equation}
We will also use the notation $\vartheta_j(\tau)=\vartheta_j(0|\tau)$ for $j=2,3,4$, and
\begin{equation}\label{Thetas}
\begin{split}
 \vartheta_2(\tau) = 2 \, \Theta_2\left( \frac{\tau}{8}\right) \,, \quad
 \vartheta_3(\tau) = \Theta_3\left( \frac{\tau}{8}\right)  \,, \quad
 \vartheta_4(\tau) = \Theta_4\left( \frac{\tau}{8}\right) \;.
\end{split}
\end{equation}
Also, we have that $E_2(\tau)$ is the normalized {\it nearly modular} weight 2 Eisenstein series
\begin{equation}
E_2(\tau):=1-24\sum_{n=1}^{\infty}\sum_{d\mid n}d \; q^n \;.
\end{equation}

\section{$\mathrm{SU}(2)$-Donaldson invariants on $\mathbb{C}\mathrm{P}^2$}\label{Section2}

Here we recall a closed formula expression for these Donaldson invariants which is due to G\"ottsche
and his collaborators \cite{Goettsche,GoettscheNakajimaYoshioka}, and we recall the
conjecture of Moore and Witten in this case. We then conclude this section
with Theorem~\ref{Criterion} which we shall use to prove Theorem~\ref{MainTheorem}.

The Donaldson invariants of a smooth, compact, oriented, simply connected Riemannian four-manifold
$(X,g)$ without boundary are defined by using intersection theory on the moduli space of
anti-self-dual instantons for the gauge groups $\mathrm{SU}(2)$ and $\mathrm{SO}(3)$ \cite{Goettsche2}.
Given a homology orientation some cohomology classes on the instanton moduli space can be associated to homology classes of $X$ through the slant
product and then evaluated on a fundamental class. We define $$\mathbf{A}(X):={\rm Sym}(H_0(X,\mathbb{Z}) \oplus H_2(X,\mathbb{Z})),$$ and we
regard the Donaldson invariants as the functional
\begin{equation}
  \mathcal{D}_{w_2(E)}^{X,g}: \mathbf{A}(X) \rightarrow \mathbb{Q} \;,
\end{equation}
where $w_2(E) \in H^2(X, \mathbb{Z}_2)$ is the second Stiefel-Whitney class
of the gauge bundles which are considered.
Since $X$ is simply connected, there is an integer class
$2\lambda_0 \in H^2(\mathbb{C}\mathrm{P}^2,\mathbb{Z})$ that is not divisible by two and
whose mod-two reduction is $w_2(E)$.
Let $\{\mathrm{s}_i\}_{i=1,\ldots, b_2}$ be a basis of the two-cycles of $X$. We introduce
the formal sum $S=\sum_{i=1}^{b_2} \kappa^i \,\mathrm{s}_i$, where  $\kappa^i$ are complex
numbers. The generator of the zero-class of $X$ will be denoted by $\mathrm{x} \in H_0(X, \mathbb{Z})$.
The Donaldson-Witten generating function is
\begin{equation}
\label{donwi}
 Z_{\mathrm{DW}}(p, \kappa)=\mathcal{D}^{X,g}_{w_2(E)}(e^{p \, \mathrm{x} + S}) \;,
\end{equation}
so that the Donaldson invariants are read off from the expansion
of (\ref{donwi}) as the coefficients of powers of $p$ and $\kappa=(\kappa^1, \dots, \kappa^{b_2})$.


In the case of the complex projective plane $\mathbb{C}\mathrm{P}^2$, we have $b_2=b_2^+=1$.
The Fubini-Study metric $g$ on $\mathbb{C}\mathrm{P}^2$ is K\"ahler with the
K\"ahler form $K= \frac{i}{2} g_{a\bar{b}} \, dz^a \wedge dz^{\bar{b}}$.
We denote the first Chern class of the dual of the hyperplane bundle over
$\mathbb{C}\mathrm{P}^2$ by $\operatorname{H} = K/\pi$,  so that
$\int_{\mathbb{C}\mathrm{P}^2} \operatorname{H}^2 =1$, $c_1(\mathbb{C}\mathrm{P}^2)=3 \operatorname{H}$, and $p_1(\mathbb{C}\mathrm{P}^2)=3 \operatorname{H}^2$.
The Poincar\'e dual $\operatorname{h}$ of $\operatorname{H}$ is a generator of the rank-one homology group
$H_2(\mathbb{C}\mathrm{P}^2,\mathbb{Z})$. The $\mathrm{SO}(3)$-bundles on four-dimensional manifolds are classified by the second Stiefel-Whitney class
$w_2(E) \in H^2(\mathbb{C}\mathrm{P}^2,\mathbb{Z}_2)$, and the first Pontrjagin class $p_1(E) \in H^4(\mathbb{C}\mathrm{P}^2,\mathbb{Z})$, such that
\begin{equation*}
 p_1(E)[\mathbb{C}\mathrm{P}^2] \equiv w_2^2(E)[\mathbb{C}\mathrm{P}^2]  \pmod{4}\;.
\end{equation*}
Since $\mathbb{C}\mathrm{P}^2$ is simply connected, there is an integer class
$2 \lambda_0 \in H^2(\mathbb{C}\mathrm{P}^2,\mathbb{Z})$ whose mod-two reduction is $w_2(E)$.
Then, there is a smooth complex two-dimensional vector bundle $\xi \to \mathbb{C}\mathrm{P}^2$ with the Chern classes
$c_1(\xi)=2 \lambda_0$ and $c_2(\xi)$, such that $c^2_1(\xi) -4 \, c_2(\xi)=p_1(E)$.
We denote by $\mathfrak{M}(c_1,c_2)$ the moduli space of rank-two vector bundles $\xi$ over $\mathbb{C}\mathrm{P}^2$ with Chern classes $c_1, \, c_2$.
It is known that $\mathfrak{M}(c_1,c_2)$ only depends on the  discriminant $c_1^2 - 4 c_2$ with the  discriminant being negative for stable bundles.
The bundle $\xi$ can be reduced to an $\mathrm{SU}(2)$-bundle if and only if $c_1(\xi)=0$ and a $\mathrm{SO}(3)$-bundle, which does not
arise as the associated bundle for the adjoint representation of a $\mathrm{SU}(2)$-bundle, satisfies $w_2(E) \not = 0$.

From now on, we will restrict ourselves to $w_2(E) = 0$, i.e.,  the case of $\mathrm{SU}(2)$-bundles and where $c_1(\xi)=0$ and $c_2(\xi)=k \operatorname{H}^2$ with $k \in \mathbb{N}$ .
The moduli space of anti-selfdual irreducible $\mathrm{SU}(2)$-connections with $c_2(\xi)[\mathbb{C}\mathrm{P}^2]=k$ modulo gauge transformations is then
the smooth, projective variety $\mathfrak{M}(0,k)$ of dimension $2d_k= 8 k - 6$ \cite{Stromme}. The generating function (\ref{donwi}) can be described as follows
\begin{equation}
\label{donwi2}
 Z_{\mathrm{DW}}(p, \kappa)= \sum_{m,n \ge 0} \Phi_{m,n} \; \frac{p^m}{m!} \, \frac{\kappa^n}{n!} \;,
\end{equation}
where $S = \kappa \, \operatorname{h}$.  Here $\Phi_{m,n}$ is the intersection number obtained by evaluating the top-dimensional cup product
of the $m$th power of a universal four-form and the $n$th power of a two-form on the fundamental class of the Uhlenbeck compactification of $\mathfrak{M}(0,k)$
such that $4m+2n=8k-6$ with $k\in \mathbb{N}$. Thus, for dimensional reasons we have $\Phi_{m,n}=0$ for $2m + n \not \equiv 1 \pmod{4}$

\subsection{The work of G\"ottsche and his collaborators}

The work of G\"ottsche and his collaborators \cite{Goettsche,GoettscheNakajimaYoshioka}  gives a closed expression
for the $\mathrm{SU}(2)$ Donaldson invariants for the complex projective plane.

Using the blowup formula for the Donaldson invariants, G\"ottsche \cite{Goettsche} derived a closed formula expression for $\Phi_{m, n}$
assuming the truth of the Kotschick-Morgan Conjecture.  Recently, G\"ottsche, Nakajima, Hiraku,  and Yoshioka \cite{GoettscheNakajimaYoshioka} have unconditionally
proved these formulas. His work was based on earlier work with Ellingsrud \cite{EllingsrudGoettsche} and Zagier \cite{GoettscheZagier}, and it extended
the results previously obtained by Kotschick and Lisca \cite{KotschickLisca} up to an overall sign convention. We state \cite[Thm.~3.5, (1)]{Goettsche}  using the original
sign convention of  \cite{EllingsrudGoettsche,KotschickLisca}. We write the result in terms of the Jacobi theta-functions $\vartheta_2, \vartheta_3, \vartheta_4$. In this way, we obtain a closed formula expression for $\Phi_{m, n}$, which we shall later show equals the
the Moore-Witten prediction based on the $u$-plane integral.
\begin{theorem}[G\"ottsche \cite{Goettsche}]
\label{Goettsche_thm}
Assuming the notation and hypotheses above, then
we have that
the only non-vanishing coefficients
in the generating function in (\ref{donwi2})
satisfy
\begin{equation}\label{Goettsche}
\begin{split}
 \Phi_{m, 2n+1} = \sum_{l=0}^n \sum_{j=0}^l & \; \frac{(-1)^{n+j+1} \, 2^{2n-3l+4}}{3^{l}} \; \frac{(2n+1)!}{(2n-2l+1)! \; j! \; (l-j)!} \\
 \times & \operatorname{Coeff}_{q^0} \left( \frac{\vartheta_4^8(\tau) \, \left[ \vartheta_2^4(\tau) + \vartheta_3^4(\tau)\right]^{m+j}}
{\left[ \vartheta_2(\tau) \, \vartheta_3(\tau)\right]^{2m+2n+5}}
 \; E^{l-j}_2(\tau) \; K_{2(n-l)}(\tau) \right) \;,
\end{split}
\end{equation}
where $m,n \in \mathbb{N}_0$, $\operatorname{Coeff}_{q^0}$ is the constant term of a series expansion in $q=\exp{(2\pi i \tau)}$.
The series $K_t(\tau)$ is
\begin{equation}
\label{K_function}
 K_{t} (\tau):= q^{\frac{1}{8}}
 \sum_{\beta=1}^\infty \sum_{\alpha=\beta}^\infty (-1)^{\alpha+\beta} \; (2\alpha+1) \; \beta^{t+1} \; q^{\frac{\alpha(\alpha+1) -\beta^2}{2}} \;.
\end{equation}
\end{theorem}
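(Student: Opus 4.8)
The statement is a reformulation of G\"ottsche's Theorem~3.5(1) of \cite{Goettsche}, so the plan is not to reprove the Donaldson computation itself but to translate G\"ottsche's closed formula into the $\vartheta$-language of \eqref{Goettsche}. First I would quote G\"ottsche's generating series, which expresses $Z_{\mathrm{DW}}(p,\kappa)$ as the constant term in $q$ of a product of a modular ``measure,'' an exponential $e^{p\,u}$ in a modular function $u$, and a Jacobi-theta building block whose $\kappa$-dependence encodes the point and surface insertions. Using \eqref{Thetas} and the dictionary in \eqref{JacobiTheta} I would rewrite each of G\"ottsche's modular objects in terms of $\vartheta_2,\vartheta_3,\vartheta_4$ and $E_2$: the measure as a constant multiple of $\vartheta_4^8/(\vartheta_2\vartheta_3)^{5}$ (the base power $5$ being read off from \eqref{Goettsche} at $m=n=l=0$), the function $u$ so that $e^{p\,u}$ produces the numerator $(\vartheta_2^4+\vartheta_3^4)^{m}$ together with part of the denominator, and the $\kappa^2$ ``contact term'' $T$ as a constant multiple of $E_2-(\vartheta_2^4+\vartheta_3^4)$.

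The explicit triple sum then comes from a purely formal coefficient extraction, and its shape can be read off the target. Writing $\tfrac{1}{j!(l-j)!}=\tfrac{1}{l!}\binom{l}{j}$, the inner $j$-sum in \eqref{Goettsche} is exactly the binomial expansion
\begin{equation*}
\sum_{j=0}^{l}\binom{l}{j}\,(-1)^{j}\,(\vartheta_2^4+\vartheta_3^4)^{\,m+j}\,E_2^{\,l-j}
=(\vartheta_2^4+\vartheta_3^4)^{m}\bigl(E_2-(\vartheta_2^4+\vartheta_3^4)\bigr)^{l},
\end{equation*}
so the $j$-summation records the expansion of the $l$-th power of the contact term $T\propto E_2-(\vartheta_2^4+\vartheta_3^4)$ arising from $e^{\kappa^2 T}=\sum_l \tfrac{1}{l!}(\kappa^2 T)^l$. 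Extracting $[\kappa^{2n+1}]$ from $e^{\kappa^2 T}$ times the theta factor routes $2l$ of the $2n+1$ powers of $\kappa$ into the contact exponential and the remaining $2(n-l)+1$ into the theta factor; together with the renormalizations $\Phi_{m,2n+1}=m!\,(2n+1)!\,[p^m\kappa^{2n+1}]Z_{\mathrm{DW}}$ forced by \eqref{donwi2} (the $m!$ cancelling the $1/m!$ from $e^{p\,u}$), this yields the $\tfrac{1}{l!}$ and the falling factorial $\tfrac{(2n+1)!}{(2n-2l+1)!}$, the $1/(2n-2l+1)!$ being the $1/r!$ in the $r$-th Taylor coefficient of the theta factor at $r=2(n-l)+1$. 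Finally that coefficient is a $(2n-2l+1)$-fold $v$-derivative of the theta building block, which produces $K_{2(n-l)}$: the factor $\beta^{t+1}$ in \eqref{K_function} is precisely what a $(t+1)$-fold differentiation in $v$ brings down, with $t=2(n-l)$.

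The main obstacle is the matching of normalizations rather than any deep new input. Identifying G\"ottsche's modular objects with the specific combinations $\vartheta_4^8$, $\vartheta_2^4+\vartheta_3^4$, and $(\vartheta_2\vartheta_3)^{2m+2n+5}$, and pinning the scalar in $T$ so that the accumulated constants collapse to $2^{2n-3l+4}/3^{l}$ (note $2^{-3}/3=\tfrac{1}{24}$, the natural normalization of a weight-two contact term) together with the sign $(-1)^{n+j+1}$, requires careful bookkeeping, made more delicate by the ``original sign convention'' flagged before the theorem. The one genuinely substantive verification is that the $v$-derivatives of G\"ottsche's theta building block reproduce the series $K_t$ of \eqref{K_function} on the nose, including the range $\alpha\ge\beta\ge1$ and the weights $(-1)^{\alpha+\beta}(2\alpha+1)$ (which point to a product of an $\eta^3$-type factor and a Jacobi theta rather than a single theta derivative); once this is settled, the dimensional constraint $4m+2n=8k-6$ and the vanishing for $2m+n\not\equiv1\pmod 4$ follow automatically from the $q$-expansion.
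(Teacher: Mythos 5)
Your proposal takes essentially the same route as the paper: both are translation arguments that set up a dictionary between G\"ottsche's modular quantities and the $\vartheta_j$, $E_2$, $K_t$ of this paper (including the reindexing $n\mapsto 2\beta+1$, $a\mapsto 2\alpha$, $q\mapsto -q^{1/2}$ that produces $K_{2(n-l)}$) and then expand the exponential in Theorem~3.5(1) of \cite{Goettsche} to extract the coefficient of $p^m\kappa^{2n+1}$. The paper's proof is just the correspondence table plus one displayed identity; you spell out the binomial bookkeeping more explicitly, but the argument is the same.
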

\begin{proof}
The following table summarizes the quantities used by G\"ottsche \cite[Thm.~3.5, (1)]{Goettsche} and in this article:

\medskip
\begin{center}
{\footnotesize
\begin{tabular}{|l|l|c|l|l|}
\cline{1-2}\cline{4-5}
 G\"ottsche  & Present Paper & \qquad \qquad & G\"ottsche  & Present Paper \\
\cline{1-2}\cline{4-5}
 $z$ & $S$ &&  $\theta(\tau) $                                        & $\vartheta_4(\tau)$ \\ \ & \ && \ & \\
 $x$ & $p$ &&  $f(\tau)      $                                        & $\frac{1}{2\sqrt{i}} \vartheta_2(\tau) \, \vartheta_3(\tau)$ \\ \ & \ &&  \ & \\
 $n$ & $2\beta +1, \; \beta \ge 0$ &&  $ \frac{\Delta^2(2\tau)}{\Delta(\tau)\,\Delta(4\tau)}$ & $ - 16 \frac{\vartheta^8_4(\tau)}{\left[ \vartheta_2(\tau) \, \vartheta_3(\tau) \right]^4}$ \\ & \ &&\  \ & \\
 $a$ & $2\alpha, \; \alpha \ge \beta+1$ && $ G_2(2\tau) $                                         & $-\frac{1}{24} \, E_2(\tau)$\\ \ & \ && \ & \\
 $\tau   $ & $\frac{\tau-1}{2}$ &&  $ e_3(2\tau) $ & $\frac{1}{12} \left[ \vartheta_2^4(\tau) + \vartheta_3^4(\tau) \right] $\\ \ &  \ &&\ &  \\
 $q  $ & $-q^\frac{1}{2}$ &&  $ \frac{-3i \, e_3(2\tau)}{f(\tau)^2}$  & $\frac{\vartheta_2^4(\tau) + \vartheta_3^4(\tau)}{\left[ \vartheta_2(\tau) \, \vartheta_3(\tau) \right]^2} $ \\ \ & \ &&\ &  \\
\cline{1-2}\cline{4-5}
\end{tabular}
}
\end{center}

\medskip
\noindent
We use
\begin{eqnarray*}
 && \left( \frac{n}{2} \, \frac{\sqrt{i}}{f(\tau)} \right)^{2(n-l)} \, \left( - \frac{i}{2 \,f(\tau)^2} (2 \, G_2(2\tau)+ e_3(2\tau))\right)^l \\
& = & \frac{(-1)^{n+l}}{2^l \, 3^l} \, (2\beta+1)^{2(n-l)} \, \frac{\left(-E_2(\tau) + \left[\vartheta_2^4(\tau) + \vartheta_3^4(\tau) \right]\right)^l}
{\left[ \vartheta_2(\tau) \, \vartheta_3(\tau) \right]^{2n}} \;.
\end{eqnarray*}
An expansion of the exponential in
\cite[Thm.~3.5, (1)]{Goettsche} then yields (\ref{Goettsche}).
\end{proof}

\subsection{The $u$-plane integral and the work of Moore and Witten}
Here we recall the theory of the $u$-plane and the work of Moore and Witten.

\label{uplane}
From now on we will assume that $(X,g)$ is a smooth, compact, oriented, simply connected Riemannian four-manifold without boundary
and $b_2^+=1$. The $u$-plane integral $Z$ is a generating function in the variables $p$ and $\kappa$
whose coefficients are the integrals
of certain modular forms over the fundamental domain of the group $\Gamma_{0}(4)$. It  depends on the period point $\omega$, the lattice $H_2(X,\mathbb{Z})$ together with the intersection form $(.\, ,.)$, the second Stiefel-Whitney classes
of the gauge bundle $w_2(E)$, and the tangent bundle $w_2(X)$, whose integral liftings are denoted by $2\lambda_0$ and $w_2$ respectively. The $u$-plane integral is non-vanishing only for manifolds with $b_2^{+}=1$.
The explicit form of $Z$ for simply connected four-manifolds was first introduced in \cite{MooreWitten}.
For the reader's convenience, we quickly review the explicit construction of the $u$-plane in this
chapter. Our approach to the $u$-plane integral, as well as its normalization, closely follows the approach in \cite{LabastidaLozano,LozanoMarino,MarinoMoore}.

We will denote the self-dual and anti-self-dual projections of any two-form $\lambda\in H^2(X,\,\mathbb{Z})+\lambda_0$
by $\lambda_{+}=(\lambda,\omega)\omega$ and $\lambda_{-}=\lambda-\lambda_{+}$ respectively.
We first introduce the integral
\begin{equation}
\label{newcpione}
\mathcal{G}(\rho)= \int_{\Gamma_0(4) \backslash \mathbb{H}}^{\text{reg}}
\dfrac{dx dy}{y^{\frac{3}{2}}} \; \widehat f (p,\kappa) \; \bar\Theta(\xi) \;.
\end{equation}
In this expression $\widehat f(p,\kappa)$ is the almost holomorphic
modular form given by
\begin{equation}
\label{nwcnvpione}
\widehat f(p,\kappa)  =
\dfrac{\sqrt{2}}{64 \pi } \dfrac{\vartheta_4^{\sigma}}{h^3 \cdot f_{2}} \;
 e^{2 \, p \, u + S^2 {\widehat T}},
\end{equation}
where $\sigma$ is the signature of $X$ and $S^2=(S,S)=\sum_{i,j} \kappa^i \kappa^j (\mathrm{s}_i,\mathrm{s}_j)$. Also, $\bar{\Theta}$ is the Siegel-Narain theta function
\begin{equation}
\label{siegnar}
\begin{split}
\bar \Theta(\xi)\,   = \, &\exp\left\lbrack \frac{\pi}{2\,y} \Big(\bar{\xi}_{+}^2-\bar{\xi}_{-}^2\Big)\right\rbrack \\
\times  \sum_{\lambda\in H^2+ \lambda_0 }
& \exp\Big\lbrack - i \pi \bar\tau (\lambda_+)^2 - i \pi   \tau(\lambda_-  )^2
- 2 \pi i\, (\lambda,\bar\xi) + \pi i\, (\lambda ,  w_2)\Big\rbrack,
\end{split}
\end{equation}
where $\bar{\xi}= \bar{\xi}_+ + \bar{\xi}_-$, $\bar{\xi}_+=\rho \, y \, h \, \omega$,
$\bar{\xi}_-= S_-/(2\pi h)$, and $\rho \in \mathbb{R}$.
The Siegel-Narain theta function only depends on the lattice data $(H^2(X), \omega, \lambda_0, w_2)$.
We have denoted the intersection form in two-cohomology by $(.\, , .)$,  and we
used Poincar\'e duality to convert cohomology classes into homology classes.
In the above expressions, $u$, $T$, $h$, and $f_{2}$
are the modular forms defined as follows:
\begin{equation}
\label{qforms}
\begin{array}{rclcrcl}
u&=&  \dfrac{\vartheta_2^4+\vartheta_3^4}{2 \, (\vartheta_2\vartheta_3)^2} \;, &\qquad&
h&=& \frac{1}{2} \, \vartheta_2 \, \vartheta_3 \;, \\ [2ex]
T&=&-\dfrac{1}{24} \left(  \dfrac{E_2}{h^2} - 8 \, u\right) \;, &&
f_{2}&=&\dfrac{\vartheta_2 \, \vartheta_3}{2 \, \vartheta_4^8} \;.
\end{array}
\end{equation}
Note that $T$ does not transform well under modular transformations, due to the presence of the second normalized Eisenstein series $E_2=E_2(\tau)$.
Therefore, in  (\ref{nwcnvpione})
we have used the related form $\widehat{T} = T + 1/(8\pi y h^{2})$
which is not holomorphic but transforms well under modular transformations.
We also define the related holomorphic function $f(p, \kappa)$
as in  (\ref{nwcnvpione}), but with $T$ instead of $\widehat{
T}$. The $u$-plane integral is defined to be
\begin{equation}
\label{u-plane_integral}
 Z\Big(X,\omega,\lambda_0,w_2\Big) =\left. \left\lbrack (S,\omega) + 2  \dfrac{d}{d\rho} \right\rbrack\right|_{\rho=0} \; \mathcal{G}(\rho)\;.
\end{equation}
If there is no danger of confusion we suppress the arguments $(X,\omega,\lambda_0,w_2)$ of $Z$.

\medskip
\noindent
{\it Two remarks.}

\noindent 1) This regularized $u$-plane integral can be thought of as the regularized Peterson inner product
of two half-integral weight modular forms on $\Gamma_0(4)$, where the regularization is obtained by integrating
over the truncated fundamental domain for $\Gamma_0(4)$ where neighborhoods of the cusps are removed.

\noindent 2) Definition (\ref{u-plane_integral}) agrees with the definition given
in \cite{LozanoMarino}. However, compared to the original definition in \cite{MooreWitten}, a factor of
$\exp{[2 \pi i (\lambda_0,\lambda_0) + \pi i (\lambda_0, w_2)]}$ is missing. For the case considered in this
article, this factor is equal to one.

\medskip
The regularization procedure applied in the definition of the integral
(\ref{newcpione}) was described in detail in \cite{MooreWitten}. It defines
a way of extracting certain contributions for each boundary component near the cusps of $\Gamma_0(4)\backslash\mathbb{H}$.
Since the cusps are located at $\tau=\infty$, $\tau=0$, and $\tau=2$, we obtain
$Z_{\mathrm{u}}$ as the sum of these contributions from the cusps:
\begin{equation}
 Z_{\mathrm{u}} = Z_{\tau=0} + Z_{\tau=2} + Z_{\tau=\infty} \;.
\end{equation}

We now apply the construction of the $u$-plane integral to $X=\mathbb{C}\mathrm{P}^2$.
We denote the integral lifting of $w_2(E)$ by $2\lambda_0=a \, \mathrm{H} \in H^2(\mathbb{C}\mathrm{P}^2,\mathbb{Z})$,
and the integral lifting of $w_2(\mathbb{C}\mathrm{P}^2)$ by $w_2=- b \, \mathrm{H}\in H^2(\mathbb{C}\mathrm{P}^2,\mathbb{Z})$.
The following lemma then follows immediately from the definition:
\begin{lemma}
On $X=\mathbb{C}\mathrm{P}^2$ let $\omega=\mathrm{H}$ be the period point
of the metric. Let $2\,\lambda_0 = a \, \mathrm{H}$ with $a \in \{0,1\}$ be an integral lifting
of $w_2(E)$. For $(X,\omega,\lambda_0,w_2=- \, \mathrm{H})$,
the Siegel-Narain theta function is
\begin{equation}
\label{siegnar_CP2}
\begin{split}
\bar \Theta  =  \exp{\left(\frac{\pi}{2\,y} \, \bar{\xi}_+^2\right)} \; \overline{\vartheta_{a1}\Big((\xi_+,\mathrm{H}) \Big| \tau\Big)},\;
\end{split}
\end{equation}
where $\bar\xi=\bar\xi_+=\rho \, y\, h \, \omega$.
\end{lemma}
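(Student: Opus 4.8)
The plan is to specialize the general formula (\ref{siegnar}) to $X=\mathbb{C}\mathrm{P}^2$ and to exploit that its intersection lattice $H^2(\mathbb{C}\mathrm{P}^2,\mathbb{Z})=\mathbb{Z}\,\mathrm{H}$, with $(\mathrm{H},\mathrm{H})=1$, is positive definite, so that $b_2=b_2^+=1$. Because the period point $\omega=\mathrm{H}$ already spans the one-dimensional self-dual subspace and $(\omega,\omega)=1$, every class is self-dual: for $\lambda\in H^2+\lambda_0$ one has $\lambda_+=(\lambda,\omega)\omega=\lambda$ and hence $\lambda_-=0$. This is the key simplification. It removes the term $-i\pi\tau(\lambda_-)^2$ from every summand, and, since $S=\kappa\,\operatorname{h}$ is likewise self-dual, it forces $\bar\xi_-=S_-/(2\pi h)=0$, so that $\bar\xi=\bar\xi_+=\rho\,y\,h\,\omega$ and the Gaussian prefactor in (\ref{siegnar}) collapses to $\exp\!\big(\tfrac{\pi}{2y}\bar\xi_+^2\big)$, matching the prefactor in (\ref{siegnar_CP2}). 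All that then remains is to identify the surviving lattice sum with a single Jacobi theta.

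Next I would parametrize the shifted lattice by $2\lambda_0=a\,\mathrm{H}$, so that $\lambda$ ranges over $\tfrac{2n+a}{2}\,\mathrm{H}$ with $n\in\mathbb{Z}$, and evaluate the three surviving inner products using $(\mathrm{H},\mathrm{H})=1$, $\omega=\mathrm{H}$, and the hypothesis $w_2=-\mathrm{H}$:
\begin{equation*}
(\lambda_+)^2=\frac{(2n+a)^2}{4},\qquad
(\lambda,\bar\xi)=\frac{2n+a}{2}\,\rho\,y\,h,\qquad
(\lambda,w_2)=-\frac{2n+a}{2}.
\end{equation*}
Substituting these into (\ref{siegnar}) and using $q^{(2n+a)^2/8}=e^{\pi i\tau(2n+a)^2/4}$, each summand becomes
\begin{equation*}
\exp\!\Big[-i\pi\bar\tau\,\tfrac{(2n+a)^2}{4}\Big]\,
\exp\!\Big[-\pi i\,(2n+a)\big(\rho\,y\,h+\tfrac12\big)\Big].
\end{equation*}
Comparing with the defining series $\vartheta_{a1}(v|\tau)=\sum_{n}q^{(2n+a)^2/8}e^{\pi i(2n+a)(v+1/2)}$ fixed earlier, this is exactly the term-by-term complex conjugate of $\vartheta_{a1}(v|\tau)$ with $v=(\xi_+,\mathrm{H})$, where $\xi_+=\overline{\bar\xi_+}$; the conjugation inside the theta restores the holomorphic $h$ carried by $\bar\xi_+$. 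This yields the claimed expression $\overline{\vartheta_{a1}\big((\xi_+,\mathrm{H})\,\big|\,\tau\big)}$.

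I expect the only genuine bookkeeping — and the place a spurious sign could enter — to be the matching of the two half-integer phases. On the lattice side the characteristic phase comes entirely from $e^{\pi i(\lambda,w_2)}=e^{-\pi i(2n+a)/2}$, produced by the choice $w_2=-\mathrm{H}$, while on the theta side it is the conjugate of the second index $b=1$, namely $\overline{e^{\pi i(2n+a)/2}}=e^{-\pi i(2n+a)/2}$; these agree precisely. I would also verify carefully that conjugating the entire theta (rather than reflecting $n\mapsto-n$) is what sends $\tau\mapsto\bar\tau$ and flips the sign of the linear term in $v$ in the manner required, and that the $\xi$ versus $\bar\xi$ convention on the modular quantity $h$ is applied consistently throughout.
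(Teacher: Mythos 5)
Your proposal is correct and is exactly the intended argument: the paper offers no written proof (it states the lemma ``follows immediately from the definition''), and your direct specialization of (\ref{siegnar}) --- using that the positive-definite rank-one lattice forces $\lambda_-=0$ and $\bar\xi_-=0$, then matching the quadratic term, the linear term via $\bar v=\rho y h$, and the characteristic phase $e^{\pi i(\lambda,w_2)}=e^{-\pi i(2n+a)/2}$ against the conjugated $\vartheta_{a1}$ series --- is precisely that verification. The care you take with the $\xi_+$ versus $\bar\xi_+$ convention (so that conjugating the theta restores the holomorphic $h$) is the one genuinely delicate point, and you handle it correctly.
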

\noindent
It was shown in \cite{MooreWitten} that for $\sigma=b_2^+-b_2^-=1$ and any value of $a$, we have
\begin{equation}
Z_{\tau=0} = Z_{\tau=2} =0 \;,
\end{equation}
and so $Z_{\mathrm{u}} = Z_{\tau=\infty}$.
We restrict ourselves to the case $a=0$, i.e., the case of $\mathrm{SU}(2)$-bundles on $\mathbb{C}\mathrm{P}^2$.
The $u$-plane integral in (\ref{u-plane_integral}) can be expanded as follows
\begin{equation}
 Z_{\tau=\infty}  = \sum_{m,n \in \mathbb{N}_0} \dfrac{p^m}{m!} \; \dfrac{\kappa^{2n+1}}{(2n+1)!} \; \mathrm{D}_{m,n},
\end{equation}
where
\begin{equation}
\label{coefficients_preint}
\mathrm{D}_{m,n} := -\frac{\sqrt{2}}{32\pi} \sum_{l=0}^n  \int^{\text{reg}}_{\Gamma_0(4)\backslash \mathbb{H}}  \frac{dx \, dy}{y^{\frac{3}{2}}} \; R_{mnl} \; \widehat{E}_2^l\;  \, \overline{\vartheta_{01}(0| \tau)}  \;.
\end{equation}
For $m,n \in \mathbb{N}_0$ and $0 \le l \le n$ we have set
\begin{equation}
\label{R_mnk}
 R_{mnl} := (-1)^{l+1} \; \dfrac{(2n+1)!}{l! \, (n-l)!} \; \dfrac{2^{m-3l-1}}{3^n} \;
 \dfrac{\vartheta_4 \cdot u^{m+n-l}}{h^{3+2l} \cdot f_{2}},
\end{equation}
where $u$, $h$, and $f_2$ were defined in (\ref{qforms}).
To evaluate the regularized $u$-plane integral we introduce the non-holomorphic modular form
$Q_{ab}(\tau)=Q^+_{ab}(\tau)+Q^-_{ab}(\tau)$  of weight $3/2$
such that
\begin{equation}\label{total_derivative}
 8 \, \sqrt{2} \pi \, i \; \frac{d}{d\bar{\tau}} \; Q_{ab}\left(\tau\right) = y^{-\frac{3}{2}} \, \overline{\vartheta_{ab}(0 | \tau)},
 \end{equation}
 where $a$ or $b$ must be zero.
 These non-holomorphic modular forms
were constructed by Zagier \cite{Zagier} and reviewed in \cite{MooreWitten}.
The holomorphic parts of Zagier's weight 3/2 Maass-Eisenstein series,
which first arose \cite{HirzebruchZagier} in connection with
intersection theory for certain Hilbert modular surfaces, are generating functions for Hurwitz class numbers.
The holomorphic part of Zagier's weight 3/2 Maass-Eisenstein series
is the generating function for Hurwitz class numbers.
They have series expansions of the form
\begin{equation}\label{Qplus_0}
 \begin{split}
  Q_{10}^+\left(\tau\right) &= \frac{1}{q^{\frac{1}{8}}} \; \sum_{l > 0} \mathcal{H}_{4l-1} \; q^{\frac{l}{2}} \;,\\
  Q_{00}^+\left(\tau\right) &= \sum_{l \ge 0} \mathcal{H}_{4l} \; q^{\frac{l}{2}},
 \end{split}
\end{equation}
where $\mathcal{H}_{\alpha}$ are the Hurwitz class numbers. The first nonvanishing Hurwitz class numbers are as follows:
\begin{equation*}
\begin{array}{|c|c|c|c|c|c|c|c|}\hline
 \mathcal{H}_0 & \mathcal{H}_3 & \mathcal{H}_4 & \mathcal{H}_7 & \mathcal{H}_8 & \mathcal{H}_{11} & \mathcal{H}_{12} & \dots \\
 \hline &&&&&&& \\[-2ex]
 - 1/12 & 1/3 & 1/2 & 1 & 1 & 1 & 4/3 & \dots\\ \hline
\end{array}
\end{equation*}
The non-holomorphic parts have series expansions of the form
\begin{equation}\label{Qminus_0}
\begin{split}
 Q^{-}_{10}\left(\tau\right) & =  \frac{1}{8\sqrt{2\pi}} \; \sum_{l = - \infty}^\infty \; (l+\frac{1}{2}) \cdot \Gamma\left( -\frac{1}{2} , 2 \, \pi \, \left(l+\frac{1}{2}\right)^2 \, y \right) \; q^{-\frac{(l+1/2)^2}{2}} \;,\\
 Q^{-}_{00}\left(\tau\right) & =  \frac{1}{8\sqrt{2\pi}} \; \sum_{l = - \infty}^\infty \; l \cdot \Gamma\left( -\frac{1}{2} , 2 \, \pi \, l^2 \, y \right) \; q^{-\frac{l^2}{2}} \;,
\end{split}
\end{equation}
where $\Gamma(3/2,x)$ is the incomplete gamma function
\begin{equation}
 \Gamma(\alpha,x) = \int_x^\infty e^{-t} \; t^{\alpha-1} \; dt \;.
\end{equation}
The forms $Q_{10}$ and $Q_{00}$ combine to form a weight $3/2$ form for the modular group.
As explained in \cite{MooreWitten} the form $Q_{01}(\tau)=Q_{00}(4\tau)-Q_{10}(4\tau)+\frac{1}{2}Q_{00}(\tau+1)$
is modular for $\Gamma_0(4)$ of weight $3/2$. We write the holomorphic part as
\begin{equation}\label{Q_01}
  Q_{01}^+\left(\tau\right) = \sum_{n \ge 0} \mathcal{R}_{n} \; q^{\frac{n}{2}}  \;.
\end{equation}
The first nonvanishing coefficients in the series expansion are as follows:
\begin{equation*}
\begin{array}{|c|c|c|c|c|c|}\hline
 \mathcal{R}_0 & \mathcal{R}_1 & \mathcal{R}_2 & \mathcal{R}_3 & \mathcal{R}_4 & \dots \\
 \hline &&&&& \\[-2ex]
 - 1/8 & -1/4 & 1/2 & -1 & 5/4 & \dots\\ \hline
\end{array}
\end{equation*}
All non-holomorphic parts have an exponential decay since
\begin{equation}
\label{decay}
  \Gamma\left( \alpha , t\right) = t^{\alpha-1} \; e^{-t} \; \left( 1 + O(t^{-1}) \right)
\qquad (t \to \infty) \;.
\end{equation}
The following lemma was proved in \cite[(9.18)]{MooreWitten}:
\begin{lemma}\label{EkQ}
The weakly holomorphic function
\begin{equation}
\label{EkQab}
 \mathcal{E}^l \left[ Q_{01} \right] = \sum_{j=0}^l (-1)^j \; \binom{l}{j} \; \frac{\Gamma\left(\frac{3}{2}\right)}{\Gamma\left(\frac{3}{2}+j\right)} \; 2^{2j} \; 3^j
 \; E_2^{l-j}(\tau) \; \left(q \, \frac{d}{dq} \right)^j Q_{01}\left(\tau\right)
\end{equation}
is modular for $\Gamma_0(4)$ of weight $2l+3/2$ and satisfies
\begin{equation}
 8\,\sqrt{2} \pi \, i \; \frac{d}{d\bar{\tau}}\; \mathcal{E}^l \left[ Q_{01} \right]  =
y^{-\frac{3}{2}} \, \widehat{E}_2^l(\tau) \; \overline{\vartheta_{01}(0 | \tau)}  \;.
\end{equation}
\end{lemma}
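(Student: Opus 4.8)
The plan is to prove the two assertions—modularity of weight $2l+\tfrac32$ for $\Gamma_0(4)$, and the $\bar\tau$-derivative formula—by isolating the single combinatorial identity satisfied by the coefficients $c_j:=(-1)^j\binom{l}{j}\frac{\Gamma(3/2)}{\Gamma(3/2+j)}2^{2j}3^{j}$ appearing in $\mathcal{E}^l[Q_{01}]$, namely the recursion $(j+1)(j+\tfrac32)\,c_{j+1}=-12\,(l-j)\,c_j$, which is immediate from $2^{2j}3^j=12^j$ and the ratio $\Gamma(3/2+j+1)/\Gamma(3/2+j)=j+\tfrac32$. Writing $D:=q\frac{d}{dq}=\frac{1}{2\pi i}\frac{\partial}{\partial\tau}$, so that $\mathcal{E}^l[Q_{01}]=\sum_{j=0}^l c_j\,E_2^{\,l-j}\,D^jQ_{01}$, I would use throughout that $Q_{01}$ transforms under $\Gamma_0(4)$ with the ordinary automorphy factor $(c\tau+d)^{3/2}$ (it is a weight-$3/2$ harmonic Maass form with multiplier $\chi$), so that the classical quasimodular transformation laws apply verbatim.

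I would establish the $\bar\tau$-derivative formula first, as it is the more transparent direction. Since $E_2$ is holomorphic and $D=\frac{1}{2\pi i}\partial_\tau$ commutes with $\partial_{\bar\tau}$, one has $\frac{d}{d\bar\tau}\mathcal{E}^l[Q_{01}]=\sum_j c_j\,E_2^{\,l-j}\,D^j\big(\frac{d}{d\bar\tau}Q_{01}\big)$, and \eqref{total_derivative} gives $\frac{d}{d\bar\tau}Q_{01}=\frac{1}{8\sqrt2\,\pi i}\,y^{-3/2}\,\overline{\vartheta_{01}(0|\tau)}$. Because $\overline{\vartheta_{01}(0|\tau)}$ is anti-holomorphic and $\partial_\tau y=\frac{1}{2i}$, a one-line computation of $\partial_\tau^j(y^{-3/2})$ yields $D^j\big(y^{-3/2}\overline{\vartheta_{01}}\big)=\frac{\Gamma(3/2+j)}{\Gamma(3/2)}\big(\frac{1}{4\pi y}\big)^{j}\,y^{-3/2}\overline{\vartheta_{01}}$; substituting and using $c_j\frac{\Gamma(3/2+j)}{\Gamma(3/2)}=(-1)^j\binom{l}{j}12^{j}$ collapses the sum by the binomial theorem to $\big(E_2-\frac{3}{\pi y}\big)^l=\widehat{E}_2^{\,l}$. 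This is exactly the second displayed equation, and it exhibits the constants $c_j$ as being engineered precisely so that the powers of $\frac{1}{4\pi y}$ reassemble the modular completion $\widehat{E}_2$.

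For modularity I would verify the transformation law on generators of $\Gamma_0(4)$. Invariance under $\tau\mapsto\tau+1$ is immediate, since $E_2$, $D$, and the $q$-expansion structure are unchanged. For $\g=\sm{a}{b}{c}{d}$ with $c\neq0$, I would combine the two standard quasimodular laws, written with $\epsilon:=\frac{c}{2\pi i(c\tau+d)}$: namely $E_2(\g\tau)=(c\tau+d)^2\big(E_2+12\epsilon\big)$ and, for any form $F$ of weight $k$ with plain automorphy, $(DF)(\g\tau)=(c\tau+d)^{k+2}\big(DF+k\epsilon F\big)$—the latter valid for the non-holomorphic $Q_{01}$ precisely because $\partial_\tau\overline{\g\tau}=0$. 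Iterating the second law gives the Cohen--Kuznetsov expansion $(D^jQ_{01})(\g\tau)=\chi\,(c\tau+d)^{3/2+2j}\sum_{i=0}^j\binom{j}{i}\frac{\Gamma(3/2+j)}{\Gamma(3/2+j-i)}\epsilon^{\,i}\,D^{j-i}Q_{01}$, which I would prove by induction on $j$. Inserting these into $\mathcal{E}^l[Q_{01}](\g\tau)$ produces $\chi\,(c\tau+d)^{2l+3/2}$ times a polynomial in $\epsilon$ whose constant term is exactly $\mathcal{E}^l[Q_{01}]$; modularity of weight $2l+\tfrac32$ is then the statement that every coefficient of $\epsilon^{m}$ with $m\ge1$ vanishes.

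The hard part is this all-orders cancellation of the $\epsilon$-anomaly, which I would handle not term by term but by recognizing it as the same recursion that drove the previous step. For example, after reindexing, the $\epsilon^1$ coefficient is $\sum_j\big(12(l-j)c_j+(j+1)(j+\tfrac32)c_{j+1}\big)E_2^{\,l-j-1}D^jQ_{01}$, and this vanishes identically by $(j+1)(j+\tfrac32)c_{j+1}=-12(l-j)c_j$; the coefficients of $\epsilon^{m}$ for $m\ge2$ vanish by the same hypergeometric identity, which I would package as an induction on $m$. Indeed the substitution $\epsilon\leftrightarrow-\frac{1}{4\pi y}$ shows that modularity and the $\bar\tau$-formula are governed by one and the same computation, with $\widehat{E}_2=E_2-\frac{3}{\pi y}$ being exactly the object that repairs the quasimodularity of $E_2$. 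Having established both the weight-$(2l+\tfrac32)$ transformation law for $\Gamma_0(4)$ and the stated $\bar\tau$-derivative, the lemma follows.
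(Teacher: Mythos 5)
Your proof is correct, but there is nothing in the paper to compare it against: the authors do not prove Lemma \ref{EkQ} at all, they simply cite equation (9.18) of Moore--Witten (and, later, Choie--Lee) for the fact that the bracket operator $\mathcal{E}^l$ repairs the quasimodularity of $E_2$ while producing $\widehat{E}_2^{\,l}$ under $\partial_{\bar\tau}$. What you have written is, in effect, the standard argument underlying those references, and both halves check out: the computation $D^j\bigl(y^{-3/2}\overline{\vartheta_{01}}\bigr)=\frac{\Gamma(3/2+j)}{\Gamma(3/2)}(4\pi y)^{-j}y^{-3/2}\overline{\vartheta_{01}}$ is right, and with $c_j\frac{\Gamma(3/2+j)}{\Gamma(3/2)}=(-1)^j\binom{l}{j}12^j$ the sum does collapse to $\bigl(E_2-\frac{3}{\pi y}\bigr)^l=\widehat{E}_2^{\,l}$, which matches the paper's normalization $\widehat{T}=T+1/(8\pi yh^2)$. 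The only step you leave compressed is the vanishing of the $\epsilon^m$ coefficients for all $m\ge 2$; your appeal to ``the same identity, by induction'' is a little optimistic as stated, but the cancellation is in fact cleaner than a hypergeometric induction: collecting the coefficient of $E_2^{\,l-r-m}\epsilon^m D^rQ_{01}$ and substituting the definition of $c_{r+i}$, the Gamma factors cancel outright and one is left with
\begin{equation*}
(-1)^r 12^{r+m}\frac{\Gamma(3/2)}{\Gamma(3/2+r)}\sum_{i=0}^m(-1)^i\binom{l}{r+i}\binom{r+i}{i}\binom{l-r-i}{m-i}
=(-1)^r 12^{r+m}\frac{\Gamma(3/2)}{\Gamma(3/2+r)}\binom{l}{r}\binom{l-r}{m}\sum_{i=0}^m(-1)^i\binom{m}{i},
\end{equation*}
which is $0$ for $m\ge1$ and reproduces $c_r$ for $m=0$. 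Writing that one line would make your proof complete and self-contained; your closing observation that the substitution $\epsilon\leftrightarrow-\frac{1}{4\pi y}$ makes modularity and the $\bar\tau$-formula two faces of the same computation is exactly the right way to see why the constants $2^{2j}3^j=12^j$ appear.
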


\subsubsection{The evaluation of the $u$-plane integral.}
It was shown in \cite{MooreWitten} that the cusp contribution at $\tau=\infty$ to the
regularized $u$-plane integral can be evaluated as follows: in (\ref{coefficients}) we integrate by parts using the modular forms constructed in Lemma \ref{EkQ}, i.e., we rewrite an integrand $f$ as a total derivative using
\begin{equation*}
 dx\wedge dy \; \partial_{\bar{\tau}} f  = \frac{1}{2} \, dx \wedge dy \; \left( \partial_{x} + i \, \partial_{y} \right) \, f
= -\frac{i}{2} \, d\Big(  f \, dx + i \, f \, dy \Big) \;.
\end{equation*}
We carry out the integral along the boundary $x=\re{(\tau)} \in [0,4]$ and $y \gg 1$ fixed. This extracts the constant term
coefficient. We then take the limit $y \to \infty$. Since all non-holomorphic parts
have an exponential decay, the non-holomorphic dependence drops out. The following expression for
the $u$-plane integral was obtained for the gauge group $\mathrm{SU}(2)$ in \cite{MooreWitten}. Additional information about the evaluation of the $u$-plane integral as well as the geometry of the
Seiberg-Witten curve can be found in \cite{M2,M}.
\begin{theorem}
\label{evaluation1}
On $X=\mathbb{C}\mathrm{P}^2$, let $\omega=\mathrm{H}$ be the period point of the metric.
For $(X,\omega,\lambda_0=0,w_2=- \mathrm{H})$,
the $u$-plane integral in the variables $p \, \mathrm{x} \in H_0(X, \mathbb{Z})$,
$S=\kappa \,\mathrm{h} \in H_2(X, \mathbb{Z})$ is
\begin{equation}
\label{generating_function}
 Z_{\mathrm{u}}=Z_{\tau=\infty} = \sum_{m,n \in \mathbb{N}_0} \dfrac{p^m}{m!} \dfrac{\kappa^{2n+1}}{(2n+1)!} \; \mathrm{D}_{m,n},
\end{equation}
where
\begin{equation}
\label{coefficients}
\mathrm{D}_{m,n} =  \sum_{l=0}^n  \; \operatorname{Coeff}_{q^0}  \Big(  R_{mnl} \; \; \mathcal{E}^l[Q_{01}^+(\tau)] \Big)
\end{equation}
and where $R_{mnl}$ and $\mathcal{E}^l[Q_{01}(\tau)]$ are defined in
(\ref{R_mnk}) and (\ref{EkQab}) respectively.
\end{theorem}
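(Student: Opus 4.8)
The plan is to transform the regularized area integral defining $\mathrm{D}_{m,n}$ in (\ref{coefficients_preint}) into a boundary integral over the fundamental domain of $\Gamma_0(4)$, and then to read off a single Fourier coefficient. The engine is Lemma~\ref{EkQ}. Since each $R_{mnl}$ in (\ref{R_mnk}) is holomorphic in $\tau$, it commutes with $\partial_{\bar\tau}$, so the defining relation in Lemma~\ref{EkQ} gives
\begin{equation*}
 R_{mnl}\; y^{-\frac{3}{2}}\,\widehat E_2^l(\tau)\,\overline{\vartheta_{01}(0|\tau)} = 8\sqrt{2}\,\pi\, i\; \frac{d}{d\bar\tau}\Bigl(R_{mnl}\,\mathcal{E}^l[Q_{01}]\Bigr).
\end{equation*}
Thus the full integrand of (\ref{coefficients_preint}) is, up to the constant $8\sqrt{2}\pi i$, an exact $\bar\tau$-derivative. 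I would first record the bookkeeping fact that makes this useful on the quotient: by (\ref{R_mnk}) and (\ref{qforms}) the form $R_{mnl}$ has weight $\tfrac12-2l$, which cancels the weight $2l+\tfrac32$ of $\mathcal{E}^l[Q_{01}]$ from Lemma~\ref{EkQ}, so $R_{mnl}\,\mathcal{E}^l[Q_{01}]$ is a (weakly holomorphic) weight $2$ object and the $2$-form being integrated is $\Gamma_0(4)$-invariant, as it must be for (\ref{coefficients_preint}) to make sense.

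Next I would apply the total-derivative identity recalled just before the theorem,
\begin{equation*}
 dx\wedge dy\;\partial_{\bar\tau} f = -\frac{i}{2}\, d\bigl(f\,dx + i\,f\,dy\bigr),
\end{equation*}
with $f=R_{mnl}\,\mathcal{E}^l[Q_{01}]$, and invoke Stokes' theorem on the truncated domain used in the regularization, namely $\Gamma_0(4)\backslash\mathbb{H}$ with small horocyclic neighborhoods of the three cusps $\tau=0,2,\infty$ deleted. This rewrites each summand of $\mathrm{D}_{m,n}$ as a boundary line integral; after the contributions of the $\Gamma_0(4)$-identified edges cancel, only the horocyclic pieces near the three cusps remain. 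The pieces at $\tau=0$ and $\tau=2$ produce exactly the cusp contributions recorded above as $Z_{\tau=0}=Z_{\tau=2}=0$ and drop out, leaving only the horocycle at $\tau=\infty$.

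At the remaining cusp the boundary is the segment $y=Y$, $x=\re(\tau)\in[0,4]$ (one period of the integrand once the half-integral multiplier systems are accounted for), and I would send $Y\to\infty$. Over this segment the line integral of $f\,d\tau$ computes, up to the measure normalization, the constant Fourier coefficient of $f=R_{mnl}\,\mathcal{E}^l[Q_{01}]$. As $Y\to\infty$ the non-holomorphic part $Q_{01}^-$ decays exponentially by (\ref{decay}), so it contributes nothing to this constant term and may be discarded, replacing $Q_{01}$ by its holomorphic part $Q_{01}^+$ inside $\mathcal{E}^l$. Combining the prefactor $-\tfrac{\sqrt2}{32\pi}$ of (\ref{coefficients_preint}) with the constant $8\sqrt2\pi i$ from Lemma~\ref{EkQ} and the factor $-\tfrac{i}{2}$ from the total-derivative identity, the normalizations assemble to unity and one obtains $\mathrm{D}_{m,n}=\sum_{l=0}^n\operatorname{Coeff}_{q^0}\bigl(R_{mnl}\,\mathcal{E}^l[Q_{01}^+]\bigr)$, which is (\ref{coefficients}).

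The formal manipulation above is short; the real work, carried out in \cite{MooreWitten}, is the analytic justification of the regularization, and this is where I expect the main obstacle to lie. One must verify that the regularized integral has a genuine limit as the cutoffs recede even though $R_{mnl}$ grows at the cusp $\tau=\infty$, that this growth does not spoil the exponential decay of the non-holomorphic pieces (so that their contribution to the surviving constant term really vanishes rather than leaving a finite remainder), and that the boundary terms at $\tau=0,2$ vanish in the limit with no residual contribution. Equally delicate is the exact tracking of the half-integral-weight multiplier systems of $\vartheta_{01}$ and $Q_{01}$, which fixes both the period $x\in[0,4]$ and the precise constants; it is this bookkeeping, rather than any conceptual step, that is easiest to get wrong.
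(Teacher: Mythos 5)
Your argument is essentially the paper's own: the paper likewise evaluates (\ref{coefficients_preint}) by using Lemma~\ref{EkQ} to rewrite the integrand as a total $\bar\tau$-derivative, integrates along the boundary $x=\re(\tau)\in[0,4]$ at fixed large $y$ to extract the constant term, lets $y\to\infty$ so the exponentially decaying non-holomorphic parts drop out, and defers the analytic justification of the regularization to \cite{MooreWitten}. Your additional weight bookkeeping (that $R_{mnl}$ has weight $\tfrac12-2l$, making the product weight $2$) is correct and consistent with what the paper leaves implicit.
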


\noindent
For concreteness, we list the first nonvanishing coefficients of the generating function
in Theorem \ref{evaluation1}, i.e., if $m+n = 2(k-1)$ for some $k \in \mathbb{N}$.
\medskip
\begin{center}
\begin{tabular}{|l|ll||l|l|}
\hline
$k$ & $m$ &  $n$ & $\mathrm{D}_{m,n}$ & $\mathrm{D}_{m,n}$ \\
\hline
 \raisebox{-0.5ex}{$1$}  &  \raisebox{-0.5ex}{$0$}  &  \raisebox{-0.5ex}{$0$}  & \raisebox{-0.5ex}{$-\frac{3}{2}$}              & \raisebox{-0.5ex}{$-\frac{1}{2} \,  \mathcal{R}_1 + 13 \, \mathcal{R}_0$} \\ [1.5ex]
 $2$ & $0$ & $2$ &$\phantom{-}1$& $-2 \, \mathcal{R}_2 + 7 \, \mathcal{R}_1 -30 \, \mathcal{R}_0$\\ [1ex]
 $2$ & $1$ & $1$ &$-1$&$-\frac{1}{4} \, \mathcal{R}_2 + \frac{1}{2} \, \mathcal{R}_1 + 6 \, \mathcal{R}_0$\\[1ex]
 $2$ & $2$ & $0$ &$-\frac{13}{8}$&$-\frac{1}{32} \, \mathcal{R}_2 - \frac{7}{16} \, \mathcal{R}_1 + \frac{55}{4} \, \mathcal{R}_0$\\ [1ex]
\hline
 \end{tabular}
\end{center}

\subsection{Criterion for proving Theorem~\ref{MainTheorem}}
Here we combine the results of the previous two subsections to obtain
a criterion for proving Theorem~\ref{MainTheorem}.

From a physics point of view, at a high energy scale, the $\mathrm{SU}(2)$-Donaldson theory
is described by the low energy effective field theory.  Thus, the cuspidal contributions to the
generating function of the low energy effective field theory should be equal to the generating
function of the $\mathrm{SU}(2)$-Donaldson theories
The conjecture is equivalent to the assertion that the generating functions $Z_{\mathrm{DW}}$ in (\ref{donwi2}) and
$Z_{\mathrm{u}}$ in (\ref{generating_function}) are equal.  This amounts to proving that for all $m,n \in \mathbb{N}_0$ we
have
\begin{equation}
\label{first}
\Phi_{m,2n+1} = \mathrm{D}_{m,n} \;.
\end{equation}
\noindent
In particular, the coefficients in (\ref{first}) vanish for $m+n \equiv 1 \pmod{2}$.
We will prove (\ref{first}) by proving:
\begin{theorem}\label{Criterion}
Theorem~\ref{MainTheorem} is equivalent to the vanishing of constant
terms, for every pair of non-negative integers $m$ and $n$, of
the series
\begin{equation}
\begin{split}
 \sum_{l=0}^n \sum_{j=0}^l & \; (-1)^{j+1}  \; \frac{(2n+1)!}{(n-l)! \; j! \; (l-j)!}
 \frac{\vartheta_4^8(\tau) \, \left[ \vartheta_2^4(\tau) + \vartheta_3^4(\tau)\right]^{m}}
{\left[ \vartheta_2(\tau) \, \vartheta_3(\tau)\right]^{2m+2n+4}}
 \; E^{l-j}_2(\tau)  \\
\times & \left[ \frac{(-1)^{n} \, 2^{2n-3l+4}}{3^{l}} \; \frac{(n-l)!}{(2n-2l+1)!}
 \; \dfrac{\left[ \vartheta_2^4(\tau) + \vartheta_3^4(\tau)\right]^{j}}{\vartheta_2(\tau) \, \vartheta_3(\tau)} \; K_{2(n-l)}(\tau) \right. \\
- & \left. \frac{(-1)^{l}\, 2^{2j-n+3}}{3^{n-j}} \;
 \frac{\Gamma\left(\frac{3}{2}\right)}{\Gamma\left(j+\frac{3}{2}\right)} \;
 \vartheta_4(\tau) \, \left[\vartheta^4_2(\tau) + \vartheta^4_3(\tau)\right]^{n-l} \; \left( q \frac{d}{dq}\right)^j  Q_{01}^+\left(\tau\right) \right] \;,
\end{split}
\end{equation}
where the series $K_t(\tau)$ are defined in (\ref{K_function}).
\end{theorem}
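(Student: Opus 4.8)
The plan is to reduce Theorem~\ref{Criterion} to a purely algebraic comparison of the two closed formulas already in hand. By the discussion preceding the statement, Theorem~\ref{MainTheorem} is equivalent to the family of identities (\ref{first}), namely $\Phi_{m,2n+1} = \mathrm{D}_{m,n}$ for all $m,n \in \mathbb{N}_0$. Both sides are rational numbers realized as constant terms of explicit $q$-series, so it suffices to produce a single $q$-series whose constant term equals $\Phi_{m,2n+1} - \mathrm{D}_{m,n}$ and to recognize it as the series displayed in the theorem. The entire task is therefore to rewrite the G\"ottsche formula (\ref{Goettsche}) for $\Phi_{m,2n+1}$ and the $u$-plane formula (\ref{coefficients}) for $\mathrm{D}_{m,n}$ so that each summand carries the common prefactor $\vartheta_4^8 [\vartheta_2^4+\vartheta_3^4]^m / [\vartheta_2\vartheta_3]^{2m+2n+4} \cdot E_2^{l-j}(\tau)$.

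First I would handle the $u$-plane side. Substituting the definitions (\ref{qforms}) of $u$, $h$, and $f_2$ into $R_{mnl}$ in (\ref{R_mnk}) and collapsing the powers of $2$, of $\vartheta_2\vartheta_3$, and of $\vartheta_4$ gives the compact form
\[
 R_{mnl} = (-1)^{l+1}\,\frac{(2n+1)!}{l!\,(n-l)!}\,\frac{2^{3-n}}{3^{n}}\,
 \frac{\vartheta_4^9(\tau)\,[\vartheta_2^4(\tau)+\vartheta_3^4(\tau)]^{m+n-l}}{[\vartheta_2(\tau)\,\vartheta_3(\tau)]^{2m+2n+4}}\,.
\]
Multiplying by $\mathcal{E}^l[Q_{01}^+]$ from (\ref{EkQab}), using $\binom{l}{j} = l!/(j!\,(l-j)!)$ to absorb the $l!$, and splitting $\vartheta_4^9 = \vartheta_4^8 \cdot \vartheta_4$ together with $[\vartheta_2^4+\vartheta_3^4]^{m+n-l} = [\vartheta_2^4+\vartheta_3^4]^m \cdot [\vartheta_2^4+\vartheta_3^4]^{n-l}$, I find that the summand of $\mathrm{D}_{m,n}$ is exactly the common prefactor times the second (the $Q_{01}^+$) term in the bracket, with overall sign $(-1)^{l+j+1}$ and numerical coefficient $2^{2j-n+3}\,3^{-(n-j)}\,\Gamma(\tfrac32)/\Gamma(j+\tfrac32)$.

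Next I would handle the G\"ottsche side of (\ref{Goettsche}). Writing $[\vartheta_2^4+\vartheta_3^4]^{m+j} = [\vartheta_2^4+\vartheta_3^4]^m \cdot [\vartheta_2^4+\vartheta_3^4]^{j}$ and peeling one power of $\vartheta_2\vartheta_3$ off the denominator exposes the same common prefactor times the first (the $K_{2(n-l)}$) term in the bracket; inserting the harmless factor $(n-l)!/(n-l)!$ rewrites $1/(2n-2l+1)!$ as $[(n-l)!/(2n-2l+1)!]$ divided by $(n-l)!$, so that the $1/(n-l)!$ joins the shared normalization while $(n-l)!/(2n-2l+1)!$ remains on the $K$-term. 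Factoring the common coefficient $(-1)^{j+1}(2n+1)!/[(n-l)!\,j!\,(l-j)!]$ out of both reorganized double sums then identifies $\Phi_{m,2n+1} - \mathrm{D}_{m,n}$ with the constant term of the displayed series, so its vanishing for every $m,n$ is equivalent to (\ref{first}) and hence to Theorem~\ref{MainTheorem}.

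The step I expect to demand the most care is the collapse of $R_{mnl}$ to its theta-quotient form: one must simultaneously track the powers of $2$ (the $2^{m-3l-1}$ prefactor, the $2^{-(m+n-l)}$ from $u^{m+n-l}$, the $2^{3+2l}$ from $h^{-(3+2l)}$, and the $2$ from $f_2^{-1}$ all telescope to $2^{3-n}$), the corresponding powers of $\vartheta_2\vartheta_3$, and the interaction of the sign $(-1)^{l+1}$ in $R_{mnl}$ with the $(-1)^j$ in $\mathcal{E}^l$. No modularity or analytic input enters here, since the result is a formal rearrangement of constant terms; the only genuine danger is an arithmetic slip in a single sign or exponent.
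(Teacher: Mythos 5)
Your proposal is correct and is essentially the paper's own (largely implicit) argument: the criterion is obtained precisely by substituting (\ref{qforms}) into $R_{mnl}$ from (\ref{R_mnk}), expanding $\mathcal{E}^l[Q_{01}^+]$, and subtracting the resulting integrand of $\mathrm{D}_{m,n}$ from the integrand of $\Phi_{m,2n+1}$ in (\ref{Goettsche}) after factoring out the common prefactor, so that the displayed series has constant term $\Phi_{m,2n+1}-\mathrm{D}_{m,n}$ and its vanishing is exactly (\ref{first}). Your bookkeeping checks out (the powers of $2$ telescope to $2^{3-n}$, the $\vartheta_2\vartheta_3$-exponent to $2m+2n+4$, and the signs combine to $(-1)^{l+j+1}$ on the $u$-plane side), so the proposal matches the paper's route.
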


\section{The proof of Theorem~\ref{MainTheorem}}\label{Section3}
 Here we prove Theorem~\ref{MainTheorem} by using the theory of non-holomorphic modular forms and meromorphic Jacobi forms
to check the condition in Theorem~\ref{Criterion}. To this end, we recall the important $q$-series
\begin{equation}
K_{t} (\tau):= q^{\frac{1}{8}}
 \sum_{\beta=1}^\infty \sum_{\alpha=\beta}^\infty (-1)^{\alpha+\beta} \; (2\alpha+1) \; \beta^{t+1} \; q^{\frac{\alpha(\alpha+1) -\beta^2}{2}}
\end{equation}
from (\ref{K_function}). In the following section we relate $K_{2t}(\tau)$ to derivatives of important power series.

\subsection{$q$-series identities}
Here we begin with the following elementary identity.

\begin{proposition}
\label{K_as_AL_Sum}
Let $\rho=e^{2\pi i u}$ and $\omega=e^{2\pi i v}$, and let $D_z=\frac{1}{2 \pi i}\frac{d}{dz},$ where $z$ is one of $u$, $v$, or $\tau$. Then we have that
\begin{displaymath}
K_{2t}(8\tau)= 2^{-2t-1}\left.D_u^{2t+1}D_v \sum_{n\in \Z}\frac{(-1)^n\omega^{2n+1}q^{(2n+1)^2}}{1-\rho^2\omega^2q^{8n+4}}\right|_{u=v=0}.
\end{displaymath}
\end{proposition}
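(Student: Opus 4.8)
The plan is to prove the identity by unwinding the right-hand side into a double sum and matching it term-by-term with the defining expansion of $K_{2t}(8\tau)$ in (\ref{K_function}). First I would expand the summand as a geometric series. Since $D_u$ and $D_v$ are $\frac{1}{2\pi i}\frac{d}{dz}$ operators, applying $D_u$ and $D_v$ will bring down factors of $u$- and $v$-frequencies, and the evaluation at $u=v=0$ sets $\rho=\omega=1$. Writing
\begin{equation*}
\frac{1}{1-\rho^2\omega^2 q^{8n+4}} = \sum_{\beta=0}^{\infty} \rho^{2\beta}\omega^{2\beta}q^{(8n+4)\beta},
\end{equation*}
the full summand becomes a double sum over $n\in\Z$ and $\beta\ge 0$ of terms of the form $(-1)^n \rho^{2\beta}\omega^{2\beta+1}q^{(2n+1)^2+(8n+4)\beta}$. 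Here $\rho^{2\beta} = e^{2\pi i u\,(2\beta)}$ and $\omega^{2\beta+1}=e^{2\pi i v\,(2\beta+1)}$, so $D_u$ acting $2t+1$ times produces a factor $(2\beta)^{2t+1}$ and $D_v$ once produces $(2\beta+1)$. After evaluation at $u=v=0$ I would obtain a double sum in $n$ and $\beta$ with weight $(-1)^n(2\beta)^{2t+1}(2\beta+1)$, times $q$ to the appropriate exponent.

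The next step is the bookkeeping that converts this into the stated shape. Pulling the factor $2^{-2t-1}$ through, $2^{-2t-1}(2\beta)^{2t+1}=\beta^{2t+1}$, which matches the $\beta^{t+1}$ slot of (\ref{K_function}) with $t\mapsto 2t$; note the exponent $2t+1$ on $\beta$ is exactly $(2t)+1$, as required. The surviving factor $(2\beta+1)$ should match the $(2\alpha+1)$ factor in $K$ after the reindexing dictated by the $8\tau$ scaling. The key is to reconcile the $q$-exponent: on the left, $K_{2t}(8\tau)$ carries $q^{8\cdot\frac{1}{8}}=q^{1}$ from the prefactor and $q^{8\cdot\frac{\alpha(\alpha+1)-\beta^2}{2}} = q^{4\alpha(\alpha+1)-4\beta^2}$ from the summand; on the right the exponent is $(2n+1)^2+(8n+4)\beta$. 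I would set $\alpha = 2n+1+2\beta$ (the natural identification making $2\alpha+1 = 4n+3+4\beta$ — so I must check the precise correspondence) and verify that completing the square in $(2n+1)^2+(8n+4)\beta$ reproduces $4\alpha(\alpha+1)-4\beta^2+1$. This algebraic identity, together with matching the signs $(-1)^n$ against $(-1)^{\alpha+\beta}$ and confirming that the summation ranges $\beta\ge 1,\ \alpha\ge\beta$ on the left correspond to $n\in\Z,\ \beta\ge 0$ on the right (with the $\beta=0$ term vanishing because of the $(2\beta)^{2t+1}$ factor), is the heart of the matter.

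The main obstacle I anticipate is precisely this reindexing: getting the substitution $(\alpha,\beta)\mapsto(n,\beta)$ exactly right so that the quadratic exponents, the parity signs, and the summation bounds all align simultaneously. In particular the sum on the right runs over all $n\in\Z$ whereas $K_{2t}$ has $\alpha\ge\beta\ge 1$, so I expect the negative-$n$ and positive-$n$ contributions to fold together (or cancel appropriately) to produce the one-sided range, and verifying this folding without an off-by-one error in the exponent is the delicate step. Once the exponent identity and the range correspondence are pinned down, the factor $2^{-2t-1}$ and the prefactor $q^{1/8}\to q^1$ under $\tau\mapsto 8\tau$ should fall into place and complete the proof.
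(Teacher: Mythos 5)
Your overall strategy --- expand the geometric series, apply $D_u^{2t+1}D_v$, and reindex to match the double sum defining $K_{2t}$ --- is the same as the paper's, but as written there are two genuine gaps. The first is the expansion $\frac{1}{1-\rho^2\omega^2q^{8n+4}}=\sum_{\beta\ge0}\rho^{2\beta}\omega^{2\beta}q^{(8n+4)\beta}$ applied uniformly over all $n\in\Z$: this is only valid when $|q^{8n+4}|<1$, i.e.\ for $n\ge0$. For $n\le-1$ the exponent $8n+4$ is negative, the series diverges, and as a formal object it produces unboundedly negative powers of $q$, so it can never be reconciled afterward with the power series $K_{2t}(8\tau)$. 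The ``folding'' of negative and positive $n$ that you defer to the end must instead be done \emph{before} expanding: the paper pairs the term at $-n-1$ with the term at $n\ge 0$ and rewrites the former via $\frac{1}{1-x^{-1}}=\frac{-x}{1-x}$, which turns the $n<0$ contribution into $\sum_{m\ge1}(-1)^n\rho^{-2m}\omega^{-(2n+1+2m)}q^{(2n+1)^2+4m(2n+1)}$ with $n\ge0$. After differentiating, these terms do not cancel against the $n\ge0$ terms --- they exactly reinforce them, since the two sign flips coming from $D_u^{2t+1}\rho^{-2m}$ and $D_v\omega^{-(2n+1+2m)}$ compensate ($(-2m)^{2t+1}\cdot(-(2n+1+2m))=(2m)^{2t+1}(2n+1+2m)$). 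This pairing is precisely what produces the one-sided range $\alpha\ge\beta\ge1$; without it the right-hand side is not a well-defined $q$-series and there is nothing to fold.

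The second gap is in the $v$-bookkeeping. The general term is $\rho^{2\beta}\,\omega^{2n+1+2\beta}\,q^{(2n+1)^2+4\beta(2n+1)}$, not $\rho^{2\beta}\omega^{2\beta+1}q^{\cdots}$, so $D_v$ brings down $2n+1+2\beta$, not $2\beta+1$. The correct reindexing is $\alpha=n+\beta$, $\beta=m$: then $2n+1+2\beta=2\alpha+1$ matches the factor $(2\alpha+1)$ in $K_{2t}$, $(-1)^n=(-1)^{\alpha-\beta}=(-1)^{\alpha+\beta}$, and $(2n+1)^2+4\beta(2n+1)=(2\alpha+1-2\beta)(2\alpha+1+2\beta)=4\alpha(\alpha+1)-4\beta^2+1$, which is the exponent of $K_{2t}(8\tau)$ including the $q^{1}$ prefactor. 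Your proposed substitution $\alpha=2n+1+2\beta$ makes neither the exponent nor the factor $(2\alpha+1)$ come out correctly. With these two repairs your argument coincides with the paper's proof.
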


\begin{proof}
By rearranging terms, it is not difficult to see that the summation on the right (prior to taking derivatives) is equal to
\begin{equation}
\begin{split}
\sum_{n\geq 0}&\frac{(-1)^n\omega^{2n+1}q^{(2n+1)^2}}{1-\rho^2\omega^2q^{4(2n+1)}}  -\rho^{-2}\omega^{-2}q^{4(2n+1)}\frac{(-1)^n\omega^{-(2n+1)}q^{(2n+1)^2}}{1-\rho^{-2}\omega^{-2}q^{4(2n+1)}}\\
&= \sum_{n\geq 0}(-1)^n\omega^{2n+1}q^{(2n+1)^2} \\
&\quad+\sum_{n\geq 0}\sum_{m\geq1}(-1)^n\left(\omega^{2n+1+2m}\rho^{2m}+\omega^{-(2n+1+2m)}\rho^{-2m}\right)q^{(2n+1)^2+4m(2n+1)}.
\end{split}
\end{equation}

We then set $\alpha=n+m$, and $\beta=m$. After applying the derivatives, evaluating at $u=v=0$, and factoring out the powers of $2$, this becomes
\begin{equation}
 \sum_{\beta=1}^\infty \sum_{\alpha=\beta}^\infty (-1)^{\alpha+\beta} \; (2\alpha+1) \; \beta^{2t+1} \; q^{4\alpha^2+4\alpha -4\beta^2+1} =K_{2t} (8\tau).
\end{equation}
\end{proof}

The summation in the right hand side of the equation in Proposition~\ref{K_as_AL_Sum} is in the form of an Appell-Lerch function. In the next section, we show how to write this in terms of Zwegers's $\mu$-function, from which we can infer its modularity properties.

\subsection{Work of Zwegers}

In his Ph.D. thesis on mock theta functions \cite{Zwegers}, Zwegers constructs weight 1/2 harmonic weak Maass forms by making use of the transformation properties of functions which were investigated earlier by Appell and Lerch. Here we briefly recall some of his results.

For $\tau$ in $\H$ and $u,v \in \C \setminus (\Z\tau+\Z)$, Zwegers defines the function
\begin{equation}\label{Def_of_Mu}
\mu(u,v;\tau):=\frac{\rho^{1/2}}{\theta(v;\tau)}\cdot \sum_{n\in \Z}\frac{(-\omega)^nq^{n(n+1)/2}}{1-\rho q^n},
\end{equation}
where $\rho=e^{2\pi i u}$ and $\omega=e^{2\pi i v}$ as above, and
\begin{equation}
\theta(v;\tau):=\sum_{\nu\in \Z+\frac12}(-1)^{\nu-\frac12}\omega^{\nu}q^{\nu^2/2}.
\end{equation}
Zwegers's (see Section 1.3 of \cite{Zwegers}) proves that $\mu(u,v,\tau)$ satisfies the following important properties.
\begin{lemma}\label{Mu_Transformations}
Assuming the notation above, we have that\\\\
\begin{tabular}{lccc}
(1)&$\mu(u,v;\tau) $&$=$&$ \mu(v,u,\tau),$\\\\
(2)&$\mu(u+1,v,\tau)$&$=$&$-\mu(u,v;\tau),$\\\\
(3)&$\rho^{-1}\omega q^{-\frac12}\mu(u+\tau,v;\tau)$&$=$&$-\mu(u,v;\tau)+\rho^{-\frac12}\omega^{\frac12}q^{-\frac18},$\\\\
(4)&$\mu(u,v;\tau+1)$&$=$&$\zeta_8^{-1}\mu(u,v;\tau)\quad(\zeta_N:=e^{2\pi i/N})$\\\\
(5)&$(\tau/i)^{-\frac12}e^{\pi i (u-v)^2/\tau}\mu(\frac u\tau,\frac v\tau;-\frac1\tau)$&$=$&$-\mu(u,v;\tau)+\frac12h(u-v;\tau),$\\\\
\end{tabular}\\
where
\[h(z;\tau):=\int_{-\infty}^{\infty}\frac{e^{\pi i x^2\tau-2\pi xz}dx}{\cosh \pi x}\]
\end{lemma}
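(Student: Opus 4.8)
The plan is to split the five identities into two groups. Identities (1)--(4) are formal consequences of the defining series and can be verified by direct manipulation, whereas the modular $S$-transformation (5) is the genuinely analytic statement and is the crux of the lemma. Throughout I would write $\mu(u,v;\tau)=\rho^{1/2}\,\theta(v;\tau)^{-1}\,F(u,v;\tau)$, where $F(u,v;\tau):=\sum_{n\in\Z}(-\omega)^n q^{n(n+1)/2}(1-\rho q^n)^{-1}$ is the Appell--Lerch sum, and I would record at the outset that $\theta(v;\tau)$ is, up to an elementary factor, the odd Jacobi theta function, so its classical quasi-periodicity, its zeros at the lattice points, the value of $\theta'(0;\tau)$, and its transformation under $\tau\mapsto-1/\tau$ are all available.

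First I would dispose of (2) and (4) by inspection. Sending $u\mapsto u+1$ fixes $\rho=e^{2\pi i u}$, hence fixes both $F$ and $\theta(v;\tau)$, while the prefactor $\rho^{1/2}=e^{\pi i u}$ changes sign, giving (2). For (4) I would send $\tau\mapsto\tau+1$: every exponent $n(n+1)/2$ in $F$ is an integer, so $F$ and $\rho^{1/2}$ are unchanged, whereas each half-integral exponent $\nu^2/2=(n^2+n)/2+\tfrac18$ in $\theta(v;\tau)$ contributes a factor $\zeta_8$, so $\theta(v;\tau+1)=\zeta_8\,\theta(v;\tau)$ and (4) follows. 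For the elliptic relation (3) I would set $u\mapsto u+\tau$ (so $\rho\mapsto\rho q$), reindex $n\mapsto n-1$ in $F$, and apply the elementary identity $q^{-n}(1-\rho q^n)^{-1}=q^{-n}+\rho\,(1-\rho q^n)^{-1}$; this returns a multiple of $F(u,v;\tau)$ together with a theta-type sum $\sum_n(-\omega)^nq^{n(n-1)/2}$, which equals $-\omega^{1/2}q^{-1/8}\theta(v;\tau)$. Dividing by $\theta(v;\tau)$ converts the latter into the inhomogeneous constant $\rho^{-1/2}\omega^{1/2}q^{-1/8}$, yielding (3) exactly as stated.

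To prove the symmetry (1), I would form $g(u,v):=\mu(u,v;\tau)-\mu(v,u;\tau)$. Using (2)--(3) in $u$, together with their analogues in $v$ (which follow from the classical quasi-periodicity of $\theta$ combined with the $\omega$-dependence of $F$), one checks that $g$ is invariant under $u\mapsto u+1$, $u\mapsto u+\tau$, $v\mapsto v+1$, and $v\mapsto v+\tau$, so $g$ is elliptic in each variable. The key step is then a residue computation: as a function of $u$, $\mu(u,v;\tau)$ has a simple pole at each $u\in\Z\tau+\Z$ coming from the vanishing of $1-\rho q^n$, while $\mu(v,u;\tau)$ has a simple pole at the same points coming from the zero of $\theta(u;\tau)$. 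I would show these residues coincide, using the value of $\theta'(0;\tau)$ and the specialization of the Appell--Lerch sum, so that $g$ is an entire elliptic function of $u$ and symmetrically of $v$, hence constant. Since $g(u,v)=-g(v,u)$, that constant is $0$, which gives (1).

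The main obstacle is (5), the transformation under $\tau\mapsto-1/\tau$, which is not a formal identity because $F$ is not a lattice sum and carries no simple automorphy factor. My plan is first to apply the classical Jacobi transformation $\theta(\tfrac v\tau;-\tfrac1\tau)=-i\,(\tau/i)^{1/2}e^{\pi i v^2/\tau}\,\theta(v;\tau)$ to absorb the theta prefactor, and then to attack the transformed Appell--Lerch sum by a Poisson-summation / contour-integral argument (the Lipschitz summation formula applied term by term). This should convert $\mu(\tfrac u\tau,\tfrac v\tau;-\tfrac1\tau)$ into $-\mu(u,v;\tau)$ plus an integral that is exactly $\tfrac12 h(u-v;\tau)$, with $h$ the Mordell integral defined in the statement. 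The delicate points are the interchange of summation and integration and the careful bookkeeping of the Gaussian factor $e^{\pi i(u-v)^2/\tau}$, which together generate the correction term; I expect this to require the full analytic theory of the Mordell integral rather than any $q$-expansion manipulation, and it is the technical heart of Zwegers's construction.
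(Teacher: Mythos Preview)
The paper does not prove this lemma at all: it simply quotes the five identities and cites Section~1.3 of Zwegers's thesis for the proof. So there is nothing in the paper to compare your argument against beyond the reference itself.

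That said, your outline is essentially Zwegers's own proof. Your treatments of (2), (3), and (4) are the standard direct computations and are correct as written. Your strategy for (1)---forming $g(u,v)=\mu(u,v;\tau)-\mu(v,u;\tau)$, using (2)--(3) and the $\theta$-quasi-periodicity to show $g$ is elliptic in each variable, matching residues at the lattice points so that $g$ is entire, hence constant, hence zero by antisymmetry---is exactly how Zwegers does it. For (5) your plan is in the right spirit, but be aware that Zwegers does not carry out a raw Poisson summation on the Appell--Lerch sum; rather, he first establishes an independent set of functional equations for the Mordell integral $h(z;\tau)$ (its behaviour under $z\mapsto z+1$, $z\mapsto z+\tau$, and $\tau\mapsto -1/\tau$), and then shows that the difference of the two sides of (5) is elliptic and pole-free, hence constant, and finally pins down the constant. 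If you try to push a term-by-term Lipschitz/Poisson argument through directly you will find the interchange of sum and integral delicate, so it is cleaner to follow Zwegers's route via the characterizing properties of $h$.
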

\remark{ The integral $h(z,\tau)$ is known as a {\it Mordell integral}.}

Lemma \ref{Mu_Transformations} shows that $\mu(u,v;\tau)$ is nearly a weight 1/2 Jacobi form, where $\tau$ is the modular variable. Zwegers then uses $\mu$ to construct weight 1/2 harmonic weak Maass forms. He achieves this by modifying $\mu$ to obtain a function $\widehat{\mu}$ which he then uses as a building block for such Maass forms. To make this precise, for $\tau \in \H$ and $u\in \C$, let $c:= \im(u)/\im(\tau)$, and let
\begin{equation}
R(u;\tau) := \sum_{\nu\in \Z+\frac12} (-1)^{\nu-\frac12}\left\{ \sgn(\nu) -E\left( (v+c)\sqrt{2\im(\tau)}\right)\right\}e^{-2\pi i\nu u} q^{-\nu^2/2},
\end{equation}
where $E(z)$ is the odd function
\[E(z):= 2\int_0^ze^{-\pi u^2}du.\]
Using $\mu$ and $R$, we let
\begin{equation}
\widehat{\mu}(u,v;\tau) := \mu(u,v;\tau)-\frac12 R(u-v;\tau).
\end{equation}

Zwegers's construction of weight 1/2 harmonic weak Maass forms depends on the following theorem (see Section 1.4 of \cite{Zwegers}).

\begin{theorem}\label{Mu_hat_Transformations} Assuming the notation above, we have that\\

\begin{tabular}{llcll}
(1)&$\widehat \mu(u,v;\tau)$ &$=$&$ \widehat\mu(v,u,\tau),$&\\\\
(2)& $\widehat \mu(u+1,v,\tau)$&$=$&$\rho^{-1}\omega q^{-\frac12}\mu(u+\tau,v;\tau)$&$ \hskip-.8in= -\widehat\mu(u,v;\tau),$\\\\
(3)& $\zeta_8^{-1} \widehat\mu(u,v;\tau+1)$ &$=$&$ -(\tau/i)^{-\frac12}e^{\pi i (u-v)^2/\tau}\widehat\mu(\frac u\tau,\frac v\tau;-\frac1\tau)$& $\hskip-.8in= \widehat\mu(u,v;\tau),$\\\\
(4)&$\widehat \mu\left(\frac{u}{c\tau+d},\frac{v}{c\tau+d};\frac{a\tau+b}{c\tau+d}\right)$&=&$\chi(A)^{-3}(c\tau+d)^{\frac12}e^{-\pi i c(u-v)^2/(c\tau+d)}\cdot \widehat\mu(u,v;\tau),$&\\\\
\end{tabular}\\
where $A=\begin{pmatrix}a&b\\c&d\end{pmatrix}$, and $\chi(A):= \eta(A\tau)/\left((c\tau+d)^{\frac 12}\eta(\tau)\right).$
\end{theorem}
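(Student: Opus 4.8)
The plan is to exploit the defining relation $\widehat\mu(u,v;\tau)=\mu(u,v;\tau)-\tfrac12 R(u-v;\tau)$ together with the fact, recorded in Lemma~\ref{Mu_Transformations}, that $\mu$ already transforms like a weight $\tfrac12$ two-variable Jacobi form \emph{except} for two explicit defects: the additive constant $\rho^{-1/2}\omega^{1/2}q^{-1/8}$ in the quasi-period relation (3), and the Mordell integral $\tfrac12 h(u-v;\tau)$ in the modular relation (5). The function $R$ is engineered so that under exactly these operations it reproduces the same defects; hence in the combination $\widehat\mu$ they cancel and one recovers a genuine real-analytic Jacobi transformation. Throughout I would take Lemma~\ref{Mu_Transformations} as given, so the entire burden falls on establishing the transformation laws of $R(z;\tau)$ and verifying the cancellations.

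\emph{Elliptic transformations} (1)--(2). First I would record three elementary properties of $R$ straight from its defining series. Replacing the summation index $\nu$ by $-\nu$ and using that $\sgn$ and $E$ are odd shows $R(-z;\tau)=R(z;\tau)$; since swapping $u\leftrightarrow v$ sends $u-v$ to $-(u-v)$, this together with Lemma~\ref{Mu_Transformations}(1) gives the symmetry (1). Because $e^{-2\pi i\nu}=-1$ for $\nu\in\Z+\tfrac12$, one reads off $R(z+1;\tau)=-R(z;\tau)$, which with Lemma~\ref{Mu_Transformations}(2) gives $\widehat\mu(u+1,v;\tau)=-\widehat\mu(u,v;\tau)$. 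Finally, shifting $z\mapsto z+\tau$ sends $c\mapsto c+1$ and, after reindexing, flips exactly the one term whose $\sgn$-value changes; tracking this boundary contribution yields a quasi-period law for $R$ whose inhomogeneous piece is precisely $\rho^{-1/2}\omega^{1/2}q^{-1/8}$. Feeding this and Lemma~\ref{Mu_Transformations}(3) into $\rho^{-1}\omega q^{-1/2}\widehat\mu(u+\tau,v;\tau)$ shows the two constants annihilate, giving the remaining equality in (2).

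\emph{Modular transformations} (3)--(4). The behaviour under $\tau\mapsto\tau+1$ is immediate: in $R$ the factor $q^{-\nu^2/2}=e^{-\pi i\nu^2\tau}$ contributes $e^{-\pi i\nu^2}=\zeta_8^{-1}$ uniformly (as $\nu^2\equiv\tfrac14\bmod 1$), so $R(z;\tau+1)=\zeta_8^{-1}R(z;\tau)$, and with Lemma~\ref{Mu_Transformations}(4) this gives the $T$-transformation of $\widehat\mu$, consistent with the multiplier $\chi^{-3}$ of (4) since $\chi(T)^{-3}=\zeta_8^{-1}$. The crux is $S:\tau\mapsto-\tfrac1\tau$, for which I would prove the $R$--$h$ relation
\[
R(z;\tau)+\frac{1}{\sqrt{-i\tau}}\,e^{\pi i z^2/\tau}\,R\!\left(\tfrac{z}{\tau};-\tfrac1\tau\right)=h(z;\tau),
\]
which says that the modular defect of $R$ is exactly the Mordell integral. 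Granting this, I set $z=u-v$ and feed the relation into Lemma~\ref{Mu_Transformations}(5): the $h$-term produced by $\mu$ is matched by the one emerging from $R$, the two cancel, and one is left with the clean weight-$\tfrac12$ law $-(\tau/i)^{-1/2}e^{\pi i(u-v)^2/\tau}\widehat\mu(\tfrac u\tau,\tfrac v\tau;-\tfrac1\tau)=\widehat\mu(u,v;\tau)$ of (3). Part (4) then follows formally: $\SL_2(\Z)=\langle S,T\rangle$, and writing $A=\sm{a}{b}{c}{d}$ as a word in $S$ and $T$ and iterating (3), the factor $(c\tau+d)^{1/2}e^{-\pi i c(u-v)^2/(c\tau+d)}$ assembles by the usual Jacobi-cocycle bookkeeping while the accumulated eighth roots of unity assemble into the eta-multiplier cubed, i.e. $\chi(A)^{-3}$.

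\emph{The main obstacle.} Everything reduces to the $R$--$h$ relation, which is the genuine analytic heart. I would establish it by realizing $R(z;\tau)$ as a non-holomorphic period (Eichler) integral of the unary weight-$\tfrac32$ theta series $g(w)=\sum_{\nu\in\Z+\frac12}\nu\,e^{\pi i\nu^2 w+2\pi i\nu z}$, of the shape $R(z;\tau)=-\int_{-\bar\tau}^{i\infty} g(w)\,(-i(w+\tau))^{-1/2}\,dw$; this identity itself comes from recognizing $\sgn(\nu)-E((\nu+c)\sqrt{2\im(\tau)})$ through the Gaussian integral representation of the complementary error function $\mathrm{erfc}$. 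Applying the known $S$-transformation of $g$ and deforming the contour, the discrepancy between the two sides collapses to a single Gaussian integral that is exactly $h(z;\tau)$. The delicate points are the justification of the contour deformation and the interchange of sum and integral (convergence at the endpoints and uniformity), together with the correct tracking of the prefactors $\sqrt{-i\tau}$, $e^{\pi i z^2/\tau}$ and the various eighth roots of unity; by contrast the elliptic laws are routine reindexing. An alternative that avoids contours is to apply Poisson summation directly to the defining series after inserting the integral representation of $\mathrm{erfc}$, which again produces $h$ after a Gaussian theta transformation, but the same bookkeeping of prefactors remains the essential difficulty.
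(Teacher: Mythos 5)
The paper gives no proof of this theorem --- it is quoted from Section 1.4 of Zwegers's thesis --- and your sketch correctly reconstructs Zwegers's own argument: the symmetry, elliptic laws and the $T$-transformation by direct reindexing of the series defining $R$, and the $S$-transformation by showing that the modular defect of $R$ is exactly the Mordell integral $h(z;\tau)$ (via the realization of $R$ as a period integral of the weight $3/2$ unary theta series, or equivalently by Poisson summation), so that it cancels the defect $\tfrac12 h(u-v;\tau)$ of $\mu$ in Lemma~\ref{Mu_Transformations}(5), with (4) then following from (2) and (3) since $S$ and $T$ generate $\SL_2(\Z)$. One remark: your computation yields $\widehat\mu(u,v;\tau+1)=\zeta_8^{-1}\widehat\mu(u,v;\tau)$, which agrees with part (4) specialized to $A=\left(\begin{smallmatrix}1&1\\0&1\end{smallmatrix}\right)$ and with Zwegers, whereas part (3) as printed above places the factor $\zeta_8^{-1}$ on the wrong side; this is a typo in the statement, not an error in your argument.
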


Theorem \ref{Mu_hat_Transformations} gives the modular transformation properties for $\widehat \mu$. In the following section we will write $K_{2t}$ in terms of $\mu$, and we then use its properties to complete $K_0(\tau)$ as a nonholomorphic modular form on $\Gamma_0(8).$
\subsection{Modularity Properties of $K_0(\tau)$}
We begin with the following proposition.
\begin{proposition}\label{K_in_Mu} We have that
\[K_{2t}(8\tau)=2^{-2t-1}\left.D_u^{2t+1}D_v\left( \rho^{-1}q^{-1}\mu(2u+2v+4\tau,2v;8\tau)\theta(2v;8\tau)\right)\right|_{u=v=0}.\]
Moreover, $\frac{K_0(8\tau)}{\eta^3(8\tau)}$ is the holomorphic part of a weight $3/2$ weak Maass which is modular on $\Gamma_0(8)$, and whose  non-holomorphic part is the period integral of $\Theta_4(\tau)$.
\end{proposition}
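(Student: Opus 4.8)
The first assertion is a purely formal consequence of Proposition~\ref{K_as_AL_Sum} together with the definition (\ref{Def_of_Mu}) of $\mu$. The plan is to insert the arguments $u'=2u+2v+4\tau$ and $v'=2v$ into (\ref{Def_of_Mu}) with modular variable $8\tau$, so that the nome is $q^8$ and $\rho'=e^{2\pi i u'}=\rho^2\omega^2q^4$, $\omega'=e^{2\pi i v'}=\omega^2$. The explicit factor $\theta(2v;8\tau)$ cancels the theta-quotient in (\ref{Def_of_Mu}), while the prefactor $(\rho')^{1/2}=\rho\,\omega\,q^2$ combines with the external $\rho^{-1}q^{-1}$ to produce exactly the Appell--Lerch series
\[
\sum_{n\in\Z}\frac{(-1)^n\omega^{2n+1}q^{(2n+1)^2}}{1-\rho^2\omega^2q^{8n+4}}
\]
appearing on the right-hand side of Proposition~\ref{K_as_AL_Sum}. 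Applying $2^{-2t-1}D_u^{2t+1}D_v|_{u=v=0}$ then yields $K_{2t}(8\tau)$, which proves the identity.

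For the second assertion I would replace $\mu$ by Zwegers's completion $\widehat\mu=\mu-\tfrac12R$ and set
\[
\widehat K(\tau):=\tfrac12\,D_uD_v\Big(\rho^{-1}q^{-1}\,\widehat\mu(2u+2v+4\tau,2v;8\tau)\,\theta(2v;8\tau)\Big)\Big|_{u=v=0}.
\]
By the $t=0$ case of the first part, $\widehat K(\tau)=K_0(8\tau)+N(\tau)$, where $N(\tau)$ collects the non-holomorphic contribution of $-\tfrac12R$; note that the difference of the two arguments of $\widehat\mu$ is $2u+4\tau$, so the relevant instance is $R(2u+4\tau;8\tau)$. It then suffices to prove that $\widehat K(\tau)/\eta^3(8\tau)$ is a weight $3/2$ weak Maass form on $\Gamma_0(8)$ whose non-holomorphic part is the period integral of $\Theta_4$.

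To establish the modularity I would use Theorem~\ref{Mu_hat_Transformations}. Writing $\g=\sm{a}{b}{c}{d}\in\Gamma_0(8)$ as $\sm{a}{8b}{c/8}{d}$ acting on $8\tau$, the automorphy factor is $c\tau+d$; since $\widehat\mu(\cdot,\cdot;8\tau)$ has weight $1/2$ and $\theta(2v;8\tau)$ has weight $1/2$, the bracketed product transforms with weight $1$, multiplier $\chi^{-3}$, and an elliptic Gaussian factor. Applying $D_u$ and $D_v$ would raise the weight to $3$; granting for the moment that the resulting object is modular (the subtlety is addressed below), dividing by $\eta^3(8\tau)$ — which has weight $3/2$, is nonvanishing on $\H$, and carries the multiplier $\chi^{3}$ — produces a weight $3/2$ form, the weak Maass property being inherited from $\widehat\mu$.

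The main obstacle is the presence of the shift $4\tau=\tfrac12\cdot 8\tau$ in the first argument, which places the specialization point $u=v=0$ at a two-torsion point of the lattice $\Z\,8\tau+\Z$ rather than at the origin. Because of this shift the first $u$-derivative of the Gaussian factor does \emph{not} vanish at $u=v=0$, so differentiating the transformation law produces anomalous terms proportional to $c/(c\tau+d)$; the crux is to show, using the elliptic and quasi-periodicity relations of Lemma~\ref{Mu_Transformations} and Theorem~\ref{Mu_hat_Transformations}, that these anomalies cancel against the corresponding derivatives of $\theta(2v;8\tau)$, which vanishes at $v=0$, leaving a genuine weight-$3$ transformation on $\Gamma_0(8)$. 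Finally, the non-holomorphic part is identified by computing $\partial_{\bar\tau}R$, which Zwegers shows is a unary theta function of weight $1/2$; carried through $D_uD_v|_{u=v=0}$ and the division by $\eta^3(8\tau)$, this shadow is $\Theta_4$, so $N(\tau)/\eta^3(8\tau)$ is its period integral. Matching the resulting multiplier with that of $\eta^3(8\tau)$ and pinning down $\Theta_4$ with the correct level and character are the remaining delicate points.
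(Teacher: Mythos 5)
Your overall route is the same as the paper's. For the first identity you substitute the shifted arguments into (\ref{Def_of_Mu}) and check that the prefactors collapse to the Appell--Lerch sum of Proposition~\ref{K_as_AL_Sum}; the paper does exactly this (and says no more). For the second part you complete $\mu$ to $\widehat\mu$, set $\widehat{K_0}(\tau)=\tfrac12 D_uD_v\bigl(\rho^{-1}q^{-1}\widehat\mu(2u+2v+4\tau,2v;8\tau)\,\theta(2v;8\tau)\bigr)\big|_{u=v=0}$, transport $\Gamma_0(8)$ acting on $\tau$ to $\mathrm{SL}_2(\Z)$ acting on $8\tau$, divide by $\eta^3(8\tau)$ to land at weight $3/2$, and identify the shadow from $R$; this is precisely the paper's argument, which computes $\partial_{\bar\tau}\bigl(-\tfrac14 D_uR(2u+4\tau;8\tau)\bigr)\big|_{u=0}=-\tfrac{1}{32\pi}y^{-3/2}\,\overline{\Theta_4(\tau)}$.

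The one place your proposal goes astray is the mechanism you offer for what you call the crux. The anomalous linear-in-$u$ term produced by the Gaussian $e^{-\pi i c(2u+4\tau)^2/(8c\tau+d)}$ in Theorem~\ref{Mu_hat_Transformations}(4) cannot cancel against derivatives of $\theta(2v;8\tau)$: that factor carries no $u$-dependence at all, and its actual role is different --- since $\theta(0;8\tau)=0$, the operator $D_v$ is forced onto $\theta$, which is what produces the factor $\eta^3(8\tau)$ and hence the identification of the shadow with $\Theta_4$. The linear and constant (in $u$) pieces of the Gaussian are instead absorbed by the prefactor $\rho^{-1}q^{-1}$: its own transformation combines with the Gaussian to leave exactly $e^{-4\pi i c u^2/(8c\tau+d)}$ (this is the displayed identity in the paper's proof), which is quadratic in $u$ and therefore annihilated by the single $D_u$ at $u=0$. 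With the cancellation reattributed to the prefactor rather than to $\theta(2v;8\tau)$, your argument closes and coincides with the paper's.
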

\begin{proof}
The first statement follows directly from Proposition~\ref{K_as_AL_Sum} and the definition of the $\mu$ function defined in (\ref{Def_of_Mu}).

To prove the remainder of the proposition, let
\begin{equation}\label{K_hat}
\widehat {K_0}(\tau)=2^{-1}\left.D_uD_v\left( \rho^{-1}q^{-1}\widehat{\mu}(2u+2v+4\tau,2v;8\tau)\theta(2v;8\tau)\right)\right|_{u=v=0},
\end{equation}
so that the holomorphic part of $\widehat {K_0}(\tau)$ is $K_0(8\tau).$ Suppose $A=\begin{pmatrix}a&b\\8c&d\end{pmatrix}\in \Gamma_0(8)$. Then we note $\overline{A}=\begin{pmatrix}a&8b\\c&d\end{pmatrix}\in SL_2(\Z).$ Using the transformation laws for $\widehat\mu$ found in Lemma~\ref{Mu_hat_Transformations}, we have
\begin{equation}
\begin{split}
e^{2\pi i \frac{-u-a\tau-b}{8c\tau+d}} \widehat{\mu}&\left(\frac{2u+2v+4(a\tau+b)}{8c\tau+d},\frac{2v}{8c\tau+b};\frac{a8\tau+8b}{c8\tau+d}\right)\\
= &\chi(\overline{A})^{-3}(-1)^{\frac{a-1}{2}}(8c\tau+d)^{1/2}e^{(2\pi i )\frac{-1}{4}\frac{8cu^2}{8c\tau+d}}\cdot \widehat{\mu}(2u+2v+4\tau,2v,8\tau),
\end{split}
\end{equation}
which is obtained by substituting $u\to \frac{u}{8c\tau+d}$, $v\to \frac{v}{8c\tau+d}$, and $\tau \to \frac{a\tau+b}{8c\tau+d}$ into the expression on the right hand side of (\ref{K_hat}), before taking derivatives. With a little more algebra, we  find that
\begin{equation}\widehat {K_0}\left(\frac{a\tau+b}{8c\tau+d}\right) =\chi(\overline{A})^{-3}(-1)^{\frac{a-1}{2}}(8c\tau+d)^{3}\widehat {K_0}(\tau).
\end{equation}
 Therefore $\widehat {K_0}(\tau)$ is modular on $\Gamma_0(8)$ with weight $3/2$. The non-holomorphic part of $\widehat {K_0}(\tau)$ is
\begin{equation}
\frac{-1}{4}\left.D_uD_vR(2u+4\tau;8\tau)\theta(2v;8\tau) \right|_{u=v=0} = \frac{-}{4}\eta^3(8\tau)\left.D_uR(2u+4\tau;8\tau)\right|_{u=0}.
\end{equation}
After factoring out $\eta^3(8\tau)$, a straightforward calculation gives us that
\begin{equation}\label{K_non-h_part}
\frac{\partial}{\partial \bar\tau}\frac{-1}{4}\left.D_uR(2u+4\tau;8\tau)\right|_{u=0}=\frac{-1}{32\pi y^{\frac{3}{2}}}\overline{\Theta_4(\tau)}.
\end{equation}
\end{proof}


\subsection{The proof of Theorem~\ref{MainTheorem}}

Thanks to Theorem \ref{Criterion}, it suffices to prove that the differences between certain $q$-series have vanishing constant term. We shall derive these conclusions by using differential operators, using methods very similar to those found in Section 8.1 of \cite{M_Ono}. For brevity, we describe the $n = 0$ cases in detail, and then provide general remarks which are required to justify the remaining cases.

 By (\ref{Q_01}) and Proposition~\ref{K_in_Mu}, we have
\begin{equation}
8Q_{01}^+(8q) =  -1 -2q^4+4q^8 -8q^{12}+10q^{16}+ \dots,
\end{equation}
and
\begin{equation}
8\frac{\widehat{K_0}(\tau)}{\eta^3(8\tau)} = 24q^4 + 80q^8+240 q^{12}+528 q^{16}\dots .
\end{equation}
Comparing (\ref{total_derivative}) (with $8\tau$ substituted for $\tau$) and (\ref{K_non-h_part}), we see that both of these are the holomorphic parts of weight 3/2 harmonic weak Maass forms with equal non-holomorphic parts. Therefore, it follows that
\begin{equation}
8\frac{\widehat {K_0}(\tau)}{\eta^3(8\tau)}-8Q_{01}(8\tau) = 1+26q^4+76q^8+248q^{12}+518q^{16}+\dots
\end{equation}
is a modular form. A short calculation shows that
\begin{equation}\label{KQ_as_E2*}
\frac{\widehat {K_0}(\tau)}{\eta^3(8\tau)}-Q_{01}(8\tau)=\frac{E^*(4\tau)}{8\Theta_4(\tau)}
\end{equation}
where $E^*(\tau)$ is the weight 2 Eisenstein series
\begin{eqnarray}
E^*(\tau):=-E_2(\tau)+2E_2(2\tau)=1+24\sum_{n=1}^\infty\sigma_{\mathrm{odd}}(n)q^n,
\end{eqnarray}
and $\sigma_{\mathrm{odd}}(n)$ denotes the sum of the positive odd divisors of $n$. Noting that
 \begin{equation}
\eta^3(8\tau)=\Theta_2(\tau)\Theta_3(\tau)\Theta_4(\tau),
\end{equation}
 where $\Theta_1$, $\Theta_2$, and $\Theta_3$ are defined in (\ref{Thetas}), we can rewrite this as
\begin{eqnarray}
\frac{8\widehat {K_0}(\tau)}{\Theta_2(\tau)\Theta_3(\tau)}-8\Theta_4(\tau)Q_{01}(8\tau)=E^*(4\tau).
\end{eqnarray}

For $n=0$, Theorem \ref{MainTheorem} is equivalent to the claim, for every $m\geq 0$, that the constant term vanishes in the expression

\begin{eqnarray}\label{N_eq_0}
\frac{\Theta_4(\tau)^8(16\Theta_2(\tau)^4+\Theta_3(\tau)^4)^mE^*(4\tau)}{\Theta_2(\tau)^{2m+4}\Theta_3(\tau)^{2m+4}}.
\end{eqnarray}
 In order to verify this claim, we will find if helpful to define
\begin{eqnarray}
Z(q) := \frac{E^*(4\tau)}{\Theta_2(\tau)^2\Theta_3(\tau)^2}.
\end{eqnarray}
which has the derivative \begin{eqnarray}
q\frac{d}{dq}Z(q) = \frac{-2~\Theta_4(\tau)^8}{\Theta_2(\tau)^2\Theta_3(\tau)^2}.
\end{eqnarray}
Here $Z(q)$ is the same as $\widehat{Z_0}(q)$ defined in Section 8.1 of \cite{M_Ono}.
We also note that
\[16\Theta_2(\tau)^4+\Theta_3(\tau)^4=1+24q^4+24q^2+\cdots=E^*(4\tau).\]
Using this notation, (\ref{N_eq_0}) becomes
\[\frac{-1}{2(m+2)}~q\frac{d}{dq} Z(q)^{m+2},\]
which  has a vanishing constant term.

In fact, for each $m,n\geq 0$, we find a similar phenomenon. For every non-negative $k$, define
\begin{equation}
\begin{split}
\mathcal G _{\ell}(q) := \sum_{j=0}^\ell  &\begin{pmatrix}\ell\\j \end{pmatrix}\frac{ (-12)^j E_2(8\tau)^{\ell-j}\Gamma(\frac32)}{\left(\Theta_2(\tau)\Theta_3(\tau)\right)^{2\ell+2}8^j \Gamma\left(\frac 32+j\right)} \\
&\times\left[\frac{(-4)^j 8 K_{2j}(8\tau)}{\Theta_2(\tau)\Theta_3(\tau)} -8\Theta_4(\tau)\left(q\frac{d}{dq}\right)^j Q_{01}^+(8\tau)  \right].
\end{split}
\end{equation}
Using this notation, the criterion given in Theorem \ref{Criterion} is equivalent to the claim that the constant coefficient of
\begin{eqnarray}
\left(q\frac{d}{dq}Z(q)\right) Z(q)^m\sum_{\ell=0}^n  \begin{pmatrix}n\\\ell \end{pmatrix}\left(-Z(q)\right)^{n-\ell}\mathcal G _{\ell}(\tau)
\end{eqnarray}
 is zero for each non-negative $m$ and $n$. It suffices to show that $\mathcal G _{\ell}(q)$ is a polynomial in $Z(q)$. We define $M_0^*(\Gamma_0(8))$ to be the space of modular functions on $\Gamma_0(8)$ which are holomorphic away from infinity, and is a subspace of $\C((q^2))$. One can easily verify that $M_0^*(\Gamma_0(8))$ is precisely the set of polynomials in $Z(q)$. In order to show that $\mathcal G _{\ell}(\tau)$ is in $M_0^*(\Gamma_0(8))$, we first show that a similar function, $\mathcal H_\ell(q)$ is in $M_0^*(\Gamma_0(8))$. We define the function

\begin{equation}
\mathcal H _{\ell}(q):= \frac{\Theta_4(\tau)}{\left(\Theta_2(\tau)\Theta_3(\tau)\right)^{2\ell+2}} \sum_{j=0}^\ell \begin{pmatrix}\ell\\j \end{pmatrix}\frac{\Gamma(\frac32) (-12)^j E_2(8\tau)^{\ell-j}}{\Gamma\left(\frac 32+j\right)8^j} \left(q\frac{d}{dq}\right)^j \frac{E^*(8\tau)}{\Theta_4(\tau)}.
\end{equation}

We can observe that $\mathcal H _{\ell}(q)$ is modular on $\Gamma_0(8)$ with weight $0$ by comparing the summation to the expression $\mathcal E^\ell\left[ \frac{E^*(8\tau)}{\Theta_4(\tau)} \right],$ where the bracket operator
\[
\mathcal E^\ell[f]:=\sum_{j=0}^\ell \begin{pmatrix}\ell\\j\end{pmatrix}\frac{\Gamma\left(\frac32\right)}
{\Gamma\left(\frac32+j\right)}(-12)^j E_2^{\ell-j}(\tau)\left(q\frac{d}{dq}\right)^jf(\tau)
\]
 is defined as in  equation (9.18) of \cite{MooreWitten} (See also \cite{ChoieLee}). This is the bracket operator used in Lemma \ref{EkQ} and, as noted, preserves modularity, but changes the weight from $\frac 32$ to $\frac32 + 2\ell$. A calculation shows that  $\left(\Theta_2(\tau)\Theta_3(\tau)\right)^{-2}$ and $\Theta_4(\tau)^{-1}$ are holomorphic away from infinity, which, combined with the fact that $\Theta_2(\tau)\Theta_3(\tau)\in \Z[[q^2]]$, shows that $\mathcal H_\ell(\tau)$ is in $M_0^*(\Gamma_0(8))$. Hence it suffices to show that $\mathcal G_\ell(q)-\mathcal H_\ell(q)$ is in $M_0^*(\Gamma_0(8))$ as well. From (\ref{KQ_as_E2*}), we see that

\begin{equation}
\begin{split}
\mathcal G _{\ell}(q)&-\mathcal H_\ell(q) = \\
&\sum_{j=0}^\ell  \begin{pmatrix}\ell\\j \end{pmatrix}\frac{\Gamma(\frac32) (-12)^j E_2(8\tau)^{\ell-j}\Theta_4(\tau)}{\left(\Theta_2(\tau)\Theta_3(\tau)\right)^{2\ell+2}\Gamma\left(\frac 32+j\right)8^j} \left[ \frac{(-4)^j 8 K_{2j}(8\tau)}{\eta^3(8\tau)} - \left(q\frac{d}{dq}\right)^j \frac{8K_{0}(8\tau)}{\eta^3(8\tau)} \right].
\end{split}
\end{equation}
Using Theorem \ref{K_in_Mu}, this can be written as

\begin{equation}\label{G-H}
\begin{split}
\mathcal G _{\ell}(q)-&\mathcal H_\ell(q)\\
=&\frac{\Theta_4(\tau)}{\left(\Theta_2(\tau)\Theta_3(\tau)\right)^{2\ell+2}} \sum_{j=0}^\ell  \begin{pmatrix}\ell\\j \end{pmatrix}\frac{\Gamma(\frac32) (-12)^j E_2(8\tau)^{\ell-j}}{\Gamma\left(\frac 32+j\right)8^{j-1}}\\
 &\times \left.\left[ (-1)^j D_u^{2j+1} D_v -D_\tau^{j}D_u D_v\right] \frac{\rho^{-1}q^{-1}\mu(2u+2v+4\tau,2v;8\tau)\theta(2v;4\tau)}{2\eta^3(8\tau)}  \right|_{u=v=0}.
\end{split}
\end{equation}

Paying particular attention to the derivatives of the $\mu$-function above, we use the transformation laws for $\mu$ found in Lemma \ref{Mu_Transformations}, and observe that the Mordel integrals that arise as obstructions to the modular transformation of  (\ref{G-H}) cancel directly. Therefore an argument similar to the proof that the bracket operator preserves modularity suffices to show that $\mathcal G _{\ell}(q)-\mathcal H_\ell(q)$ is modular with respect to $\Gamma_0(8)$. Some simple accounting shows that $\mathcal G _{\ell}(q)-\mathcal H_\ell(q)$ is supported on even exponents of $q$, and hence $\mathcal G _{\ell}(q)-\mathcal H_\ell(q)$ is in $M_0^*(\Gamma_0(8))$. This completes the proof.

\subsection{Examples}

In the table below, we give the polynomial $P_n(x)$ such that the  expression in the statement of Theorem \ref{Criterion} can be written as
\begin{eqnarray}
\left(q\frac{d}{dq}Z(q^{1/8})\right) \left(\frac{Z(q^{1/8})}{2}\right)^mP_n(Z(q^{1/8})).
\end{eqnarray}

\begin{center}

\begin{tabular}{|l|l|}
\hline \ & \ \\
$n$& $P_n(x)$\\ 
\hline
0&$\frac{1}{32}x$\\ \ & \ \\
1&$-1/2$\\ \ & \ \\
2&$\frac{13}{16}x$\\ \ & \ \\
3&$-\frac{11}{16}x^2-87$\\ \ &  \ \\
4&$\frac{13}{16}x^3 + \frac{4175}{8}x$\\ \ & \ \\
5&$-\frac{11}{16}x^4-\frac{9607}{4}x^2-80662$\\ \ & \ \\
6&$\frac{13}{16}x^5+\frac{80153}{8}x^3 + \frac{5958039}{4}$\\ \ & \ \\
\hline
\end{tabular}

\end{center}

\end{document}